\title[Variation of the canonical height in several variables]{Variation of the canonical height for polynomials in several variables}
\author{Patrick Ingram}
\address{Colorado State University, Fort Collins, Colorado, USA}
\email{pingram@math.colostate.edu}
\date{\today}
\newcommand{\QQ}{\mathbb{Q}}
\newcommand{\ZZ}{\mathbb{Z}}
\newcommand{\CC}{\mathbb{C}}
\newcommand{\RR}{\mathbb{R}}
\newcommand{\PP}{\mathbb{P}}
\renewcommand{\AA}{\mathbb{A}}
\newcommand{\Ocal}{\mathcal{O}}
\newcommand{\Supp}{\operatorname{Supp}}
\newcommand{\ord}{\operatorname{ord}}
\newcommand{\Div}{\operatorname{Div}}
\newcommand{\basin}{\mathcal{B}}
\renewcommand{\phi}{\varphi}
\renewcommand{\epsilon}{\varepsilon}
\newcommand{\var}{C}
\newtheorem{theorem}{Theorem}
\newtheorem{lemma}[theorem]{Lemma}
\newtheorem{corollary}[theorem]{Corollary}
\theoremstyle{remark}
\newtheorem{remark}{Remark}
\theoremstyle{definition}
\begin{document}

\begin{abstract}
Let $K$ be a number field, let $X/K$ be a curve, let $f_t:\PP^N\to\PP^N$ be a family of morphisms over $X$, and let $P:X\to \PP^N$ be a morphism. Writing $\hat{h}_{f_t}$ for the canonical height associated to $f_t$, it follows from a result of Call and Silverman that $\hat{h}_{f_t}(P_t)/h_X(t)=\hat{h}_{f_\eta}(P_\eta)+o(1)$, where $\eta$ is the generic point of the curve. Assuming that there is a hyperplane $H$ such that $f_\eta^*H=dH$, and such that the restriction of the family to $H$ is isotrivial, we improve this estimate to
\[\hat{h}_{f_t}(P_t)=\hat{h}_{f_\eta}(P_\eta)h_X(t)+O(1),\]
for a particular Weil height $h$ on $X$. In the one-variable case, this reduces to an earlier result of the author.
\end{abstract}
\maketitle

Let $f:\PP^N\to\PP^N$ be a morphism of degree $d\geq 2$, defined over a number field $K$.  There is, associated to $f$, a canonical height function $\hat{h}_f:\PP^N(\overline{K})\to\RR$ which is roughly analogous to the N\'{e}ron-Tate height on an abelian variety, and reveals much about the arithmetic dynamics of $f$. It is natural to ask how this function varies as one varies the morphism to which it is associated.

Call and Silverman~\cite{call-silv} have shown that if $t\mapsto f_t$ is an algebraic family of endomorphisms of $\PP^N$, over a base curve $X$, and $t\mapsto P_t$ is a family of points, then
\begin{equation}\label{eq:cs}\hat{h}_{f_t}(P_t)=\left(\hat{h}_{f_\eta}(P_\eta)+o(1)\right)h_{X, D}(t),\end{equation}
where $\eta\in X$ is the generic point, $D\in \Div(X)$ is any divisor of degree $1$, and $o(1)\to 0$ as $h_{X, D}(t)\to\infty$. The author~\cite{pi:spec} showed that the estimate \eqref{eq:cs} could be improved to
\begin{equation}\label{eq:varpaper}\hat{h}_{f_t}(P_t)=\hat{h}_{f_\eta}(P_\eta)h_{X, D}(t)+O(1),\end{equation}
for a specific divisor $D=D(f, P)\in \Div(X)\otimes\QQ$ of degree 1, but only for polynomial endomorphisms of $\PP^1$ (remarkably, Ghioca and Mavraki~\cite{dragos} have recently extended this to certain non-polynomial rational functions). 
%It is not \emph{a priori} obvious that \eqref{eq:varpaper} is a direct improvement of \eqref{eq:cs}, since the estimate no longer applies to an arbitrary divisor class, but standard facts about heights allow one to conclude from \eqref{eq:varpaper} that \[\hat{h}_{f_t}(P_t)=\hat{h}_{f_\eta}(P_\eta)h_{X, D}(t)+O(\sqrt{h_{X, D}(t)})\] in general.
  The estimate~\eqref{eq:varpaper} is similar to a result of Tate~\cite{tate} in the context of elliptic surfaces, and  is in some sense best possible, since Weil heights are generally viewed as $O(1)$-equivalence classes. This note provides a generalization of the main result of~\cite{pi:spec} to the case of polynomial maps in several variables. Note that for any morphism $f:\PP^N\to\PP^N$ and any hyperplane $H\subseteq\PP^N$, we have $f^*H\sim dH$ for some integer $d\geq 1$. We will say that $f$ is a \emph{regular polynomial endomorphism with respect to} $H$ if this linear equivalence turns out to be an equality. For a study of the complex dynamics of regular polynomial endomorphisms of $\PP^N$, see Bedford and Jonsson~\cite{bj}.

There is a natural map from the space of regular polynomial endomorphisms of $\PP^N$ to the moduli space of endomorphisms of $\PP^{N-1}$ of the same degree, obtained by restricting to the invariant hyperplane, and we will say that a family $f_t$ over $X$ is \emph{fibral} if and only if its image is contained in a single fibre of this projection. 
\begin{theorem}\label{th:main}
Let $K$ be a number field, let $X/K$ be a projective curve,  let $f_t:\PP^N\to\PP^N$ be a fibral family of regular polynomial endomorphisms relative to $H$ over $X$ defined over $K$, and let $P:X\to\PP^N$. There is a divisor $D=D(f, P)\in\Div(X)\otimes \QQ$ of degree 1 such that
\[\hat{h}_{f_t}(P_t)=\hat{h}_{f_\eta}(P_\eta)h_{X, D}(t)+O(1).\]
\end{theorem}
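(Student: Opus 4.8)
The strategy is to reduce the several-variable statement to the one-variable result of \cite{pi:spec} by exploiting the structure of regular polynomial endomorphisms. Write coordinates on $\PP^N$ so that $H=\{x_0=0\}$ is the invariant hyperplane. A regular polynomial endomorphism $f$ relative to $H$ of degree $d$ lifts to a map of the form
\[
F(x_0,x_1,\dots,x_N)=\bigl(x_0^d,\ g_1(x_0,\dots,x_N),\ \dots,\ g_N(x_0,\dots,x_N)\bigr),
\]
where each $g_i$ is homogeneous of degree $d$, and the restriction $f|_H$ is the endomorphism of $\PP^{N-1}=H$ given by $(g_1,\dots,g_N)(0,x_1,\dots,x_N)$. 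The fibrality hypothesis says precisely that, after a change of coordinates on $H$ constant over $X$, this restricted map $\bar f=f_\eta|_H$ is independent of $t$; so I may assume $\bar f_t=\bar f$ is a fixed endomorphism of $\PP^{N-1}$. The canonical height decomposes as a sum of local terms, and the plan is to show that the "vertical'' contribution (coming from the $x_0$-coordinate) is governed by a one-variable dynamical system to which the earlier result applies, while the "horizontal'' contribution (along $H$) is essentially constant because $\bar f$ is constant.

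First I would set up the local canonical heights $\hat\lambda_{f_t,v}$ at each place $v$ of the relevant number field, normalized so that $\hat h_{f_t}=\sum_v \hat\lambda_{f_t,v}$. The key observation is a comparison: because $F_t$ fixes the coordinate $x_0$ up to the $d$-th power map, one can compare the orbit of $P_t=(a_0(t):a_1(t):\dots:a_N(t))$ under $f_t$ with the orbit of $a_0(t)$ under $z\mapsto z^d$ on $\PP^1$, and with the orbit of the image point $\bar P_t\in H$ under the \emph{fixed} map $\bar f$. Concretely, in the affine chart where $x_0\neq 0$, setting $y_i=x_i/x_0$, the map $f_t$ becomes a polynomial self-map of $\AA^N$ whose top-degree part is determined by $\bar f$; the dynamics near infinity (i.e.\ near $H$) are thus uniformly controlled, and the escape rate to $H$ is exactly the $z\mapsto z^d$ Green's function applied to $1/a_0(t)$. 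This gives a decomposition, valid up to $O(1)$ uniformly in $t$,
\[
\hat h_{f_t}(P_t)=G_t(P_t)+\hat h_{\bar f}(\bar P_t)\cdot(\text{something})+O(1),
\]
which I would make precise by a telescoping/Tate-style argument on the local functions.

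The main obstacle — and the step requiring the most care — is establishing the \emph{uniform} $O(1)$ bound on the local terms as $t$ varies, in particular at the finitely many places of bad reduction and near the points of $X$ where $P_t$ degenerates (meets $H$, or where the family $f_t$ itself degenerates). This is exactly where the one-variable argument of \cite{pi:spec} did its real work: one must analyze the Green's functions $G_{f_t,v}$ on a neighborhood of $H$ and show that their dependence on $t$ is, up to bounded error, a \emph{Weil height} attached to an explicit $\QQ$-divisor $D$ on $X$ — the divisor records the orders of vanishing/poles of the resultant-type quantities measuring degeneration. I would build $D=D(f,P)$ as a sum: a "horizontal'' part coming from the divisor of $a_0(t)$ (the section cutting out where $P$ meets $H$), weighted by the appropriate geometric series $\sum d^{-n}$ from iteration of $z\mapsto z^d$, plus a correction supported at the bad fibers of the family, extracted via the theory of Call–Silverman local heights over the function field $K(X)$. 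Once $D$ is identified, the theorem follows by comparing $\hat h_{f_t}(P_t)$ place-by-place with $\hat h_{f_\eta}(P_\eta)\,h_{X,D}(t)$ and invoking functoriality of canonical heights together with the fixed-map case for the $H$-component; the leading coefficient $\hat h_{f_\eta}(P_\eta)$ emerges because $\deg D=1$ and the generic-fibre height is computed by the same local recipe at the generic point.
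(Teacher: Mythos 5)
The central claim of your proposal is not correct, and the gap is substantive. You assert that ``the escape rate to $H$ is exactly the $z\mapsto z^d$ Green's function applied to $1/a_0(t)$,'' i.e.\ that the several-variable Green's function reduces to the one-variable power-map Green's function evaluated on a single coordinate of $P$. This is false. The Green's function $G_{f,v}(P)=\lim_n d^{-n}\log^+\|f^n(P)\|_v$ measures the escape rate of the \emph{whole} orbit in $\AA^N=\PP^N\setminus H$, and this depends on the genuine $N$-variable dynamics of $f$ restricted to the affine chart, not merely on the fact that the distinguished coordinate transforms as $z\mapsto z^d$. To see the failure concretely, take the Baker--DeMarco family $f_t[x:y:z]=[x^d+tz^d:y^d+tz^d:z^d]$ and $P_t=[a:b:1]$ with $a,b\in K$ fixed: the escape rate is $\max\{G_{z^d+t,v}(a),\,G_{z^d+t,v}(b)\}$, whereas the $z\mapsto z^d$ Green's function applied to $1/a_0(t)=1$ is identically $0$. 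Your proposed decomposition $\hat h_{f_t}(P_t)=G_t(P_t)+\hat h_{\bar f}(\bar P_t)\cdot(\cdots)+O(1)$ likewise has no basis: for $P_t\notin H$ the canonical height \emph{is} the sum of Green's functions and there is no separate ``$H$-component''; the restricted map enters the problem only as the case $P(X)\subseteq H$ (handled separately in the paper by Lemma~\ref{lem:isotrivcase}) and, crucially, as the source of \emph{uniformity} of error constants in the Green's function estimates. Your proposed divisor (the divisor of $a_0(t)$ weighted by a geometric series, plus corrections) is similarly wrong: in general the correct divisor records the $\beta$-adic sizes of \emph{all} coordinates of $P_\eta$ weighted by the actual function-field Green's function $G_{f_\eta,\beta}(P_\eta)$, not just the pole divisor of one coordinate.

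What the paper actually does is construct Green's functions directly on $\PP^N$ and prove (Lemma~\ref{lem:greensprops}) that once $\log\|P\|_v$ exceeds a threshold $\basin_v(f)+\gamma_v$, one has $G_{f,v}(P)=\log\|P\|_v+O_v(1)$ with \emph{both} the threshold adjustment $\gamma_v$ and the error constant depending only on $f|_H$. This is precisely where fibrality is used: since $f_t|_H$ is constant, those constants are uniform in $t$. The proof of this lemma is a Nullstellensatz estimate comparing $f_i(P)$ to its top-degree part $g_i(P)=f_i|_{x_N=0}$, and is genuinely several-variable; there is no reduction to the one-variable case. The divisor is then defined by $D=\sum_\beta G_{f_\eta,\beta}(P_\eta)[\beta]$ using the Green's function over the function field $F=\overline{K}(X)$, and the heart of the argument (Theorem~\ref{th:local}) shows $G_{f_t,v}(P_t)=\lambda_{X,D,v}(t)+O_v(1)$ place by place, with careful annulus-by-annulus analysis near the poles of the coordinates and coefficients. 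You correctly identified that the hard part is uniform $M_K$-bounded control of local terms, and correctly identified fibrality as the source of that uniformity, but the mechanism you propose to exploit it is not valid, and without Lemma~\ref{lem:greensprops} (or an equivalent) the argument does not go through.
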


Although Theorem~\ref{th:main} applies to a specific divisor class, from it one can deduce a general result which improves~\eqref{eq:cs} in the relevant cases. Note that the result is easily extended to divisors of arbitrary degree, by the linearity of the Weil height machine.

\begin{corollary}\label{cor:lang}
Let $K$, $X$, $f$, and $P$ be as in Theorem~\ref{th:main}. Then for any divisor $D\in \Div(X)\otimes \QQ$  we have
\[\deg(D)\hat{h}_{f_t}(P_t)=\hat{h}_{f_\eta}(P_\eta)h_{X, D}(t)+O\left(h_{X, D}(t)^{1/2}\right),\]
where the error term may be further improved to $O(1)$ if $X$ is rational.
\end{corollary}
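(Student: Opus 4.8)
The plan is to deduce the corollary from Theorem~\ref{th:main} by comparing an arbitrary divisor $D$ with the distinguished degree-one divisor $D_0 = D(f,P)$ furnished by the theorem, and then to control the resulting discrepancy using the classical fact that a Weil height attached to an algebraically trivial divisor on a curve grows like the square root of an ample height. It suffices to treat the case $\deg D > 0$, the remaining cases being either formal or vacuous.

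First I would invoke Theorem~\ref{th:main} to fix $D_0 \in \Div(X)\otimes\QQ$ with $\deg D_0 = 1$ and $\hat{h}_{f_t}(P_t) = \hat{h}_{f_\eta}(P_\eta)\,h_{X,D_0}(t) + O(1)$. Multiplying by $\deg(D)$ and applying the $\QQ$-linearity of the height machine gives
\[\deg(D)\,\hat{h}_{f_t}(P_t) = \hat{h}_{f_\eta}(P_\eta)\,h_{X,\,\deg(D)\,D_0}(t) + O(1).\]
Now set $E = \deg(D)\,D_0 - D \in \Div(X)\otimes\QQ$; since $\deg D_0 = 1$ this divisor has degree $\deg(D) - \deg(D) = 0$. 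Linearity once more yields $h_{X,\,\deg(D)\,D_0}(t) = h_{X,D}(t) + h_{X,E}(t) + O(1)$, so the corollary reduces to showing $h_{X,E}(t) = O\bigl(h_{X,D}(t)^{1/2}\bigr)$ in general and $h_{X,E}(t) = O(1)$ when $X$ is rational. If $\hat{h}_{f_\eta}(P_\eta) = 0$ there is nothing to prove beyond the $O(1)$ bound of Theorem~\ref{th:main}.

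It remains to bound $h_{X,E}(t)$ for the degree-zero divisor $E$. Its class lies in $\operatorname{Pic}^0(X)\otimes\QQ \cong J(\overline{K})\otimes\QQ$, where $J = \operatorname{Jac}(X)$, and so corresponds to a fixed point $e$; choosing a base point to obtain an embedding $j\colon X\hookrightarrow J$, functoriality of heights gives $h_{X,E}(t) = \langle e, j(t)\rangle + O(1)$, where $\langle\,,\,\rangle$ denotes the N\'eron--Tate pairing on $J$. Since this pairing is the bilinear form of the positive semidefinite canonical height $\hat{h}_\Theta$ attached to a symmetric ample divisor $\Theta$ on $J$, Cauchy--Schwarz together with the constancy of $\hat{h}_\Theta(e)$ gives $|\langle e, j(t)\rangle| = O\bigl(\hat{h}_\Theta(j(t))^{1/2}\bigr)$, and $\hat{h}_\Theta(j(t)) = h_{X,\,j^*\Theta}(t) + O(1) = O\bigl(h_{X,D}(t)\bigr)$ because $j^*\Theta$ is ample on $X$ and $\deg D > 0$. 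This furnishes the desired $O\bigl(h_{X,D}(t)^{1/2}\bigr)$ bound. When $X$ is rational, $\operatorname{Pic}^0(X) = 0$, so $E$ is trivial in $\operatorname{Pic}(X)\otimes\QQ$ and $h_{X,E}(t) = O(1)$, which sharpens the error term as claimed. The only non-formal input is thus the sublinear growth of heights along $\operatorname{Pic}^0(X)$, which is classical; I anticipate no genuine obstacle here, the work being confined to the bookkeeping of the correction divisor $E$ and the harmless case $\hat{h}_{f_\eta}(P_\eta) = 0$.
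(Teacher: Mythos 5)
Your proof is correct, and the overall decomposition is the same as the paper's: reduce to comparing $h_{X,\deg(D)D_0}$ with $h_{X,D}$ for the degree-one divisor $D_0 = D(f,P)$ of Theorem~\ref{th:main}, i.e.\ to bounding the height attached to the degree-zero $\QQ$-divisor $E = \deg(D)D_0 - D$. The only difference is that the paper dispatches the key estimate in a single line by citing Lang's quasi-equivalence of heights (Proposition~5.4, p.~115 of \emph{Fundamentals of Diophantine Geometry}): two divisors of the same degree on a curve have Weil heights differing by $O(h^{1/2})$, with $O(1)$ in genus zero. You instead re-derive that classical fact by embedding $X$ into its Jacobian, identifying $h_{X,E}$ up to $O(1)$ with a N\'eron--Tate pairing $\langle e, j(t)\rangle$, and applying Cauchy--Schwarz; this is essentially how Lang proves the proposition, so you have unfolded rather than replaced the citation. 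Your handling of the rational case via $\operatorname{Pic}^0(X)=0$ likewise matches the cited improvement. One small caveat worth flagging (shared by the corollary as written, not a defect of your argument): for $\deg D \leq 0$ the quantity $h_{X,D}(t)^{1/2}$ is ill-defined on points where $h_{X,D}(t) < 0$, so the statement is only really meaningful for $\deg D > 0$; your remark that the remaining cases are ``formal or vacuous'' should perhaps be replaced by an explicit restriction to positive degree.
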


We note that in the case $N=1$, the hypothesis that family be fibral automatically obtains, since $H$ is a point. In the case $N\geq 2$, this assumption is non-trivial, and is used to ensure sufficient regularity of local heights. Specifically, the fact that all polynomials in one variable behave similarly close to the totally invariant point grants a great deal of uniformity in the construction of dynamical Green's functions. In dimension $N\geq 2$, this is no longer true, but when we restrict attention to families for which it is, a similar regularity of Green's functions emerges. We note that the condition may be presented in a more elementary, but coordinate dependent, fashion. If $f\in K(X)(\PP^N)$ is written in coordinates as
\[f_i(x_0, ..., x_N)=\sum_{j_0+\cdots+j_N=d}c_{i, (j_0, ..., j_N)}x_0^{j_0}\cdots x_N^{j_N}\]
for $0\leq i\leq N-1$, with $c_{i, (j_0, ..., j_N)}\in K(X)$ and $f_N=x_N^d$, then $f$ is polynomial with respect to the hyperplane $H$ defined by $x_N=0$. The family is then fibral if (but not only if) $c_{i, (j_0, ..., j_N)}$ for all indices with $j_N=0$.

\begin{remark}
The condition of fibrality of the family in Theorem~\ref{th:main} seems initially to be quite strong, but we note that eliminating it implies something much stronger. In particular, suppose that Theorem~\ref{th:main} holds for arbitrary families of regular polynomial endomorphisms, and let $f/X:\PP^N\to\PP^N$ be any family of endomorphisms (regular or otherwise). It is easy to see that $f$ is the restriction to the invariant hyperplane of a family $\tilde{f}$ of regular polynomial endomorphisms of $\PP^{N+1}$. If $f=[f_0:\cdots :f_N]$, we may simply set $\tilde{f}=[f_0:\cdots f_N:x_{N+1}^{\deg(f)}]$. It is also easy to check that, for any $P\in \PP^N(X)$ we have $\hat{h}_{f_t}(P_t)=\hat{h}_{\tilde{f}_t}(\tilde{P}_t)$, where $\tilde{P}$ is the image of $P$ under the obvious isomorphism $\PP^N\cong \{x_{N+1}=0\}\subseteq \PP^{N+1}$. In other words, if Theorem~\ref{th:main} can be extended to arbitrary families if regular polynomial endomorphisms of $\PP^N$, for any $N$, then it can also be extended to arbitrary families of morphisms.
\end{remark}

It follows immediately from Theorem~\ref{th:main} that if $f$ and $P$ are as in the theorem, and $\hat{h}_{f_\eta}(P_\eta)>0$, then the set
\begin{equation}\label{eq:specialization}\left\{t\in X(\overline{K}):P_t\text{ is preperiodic for }f_t\right\}\end{equation}
is a set of bounded height, and hence contains only finitely many points of any given algebraic degree over $K$, but in many cases something stronger is true. For instance, Baker and DeMarco~\cite{bd} have shown that if \[f_t(x, y, z)=[x^d+tz^d:  y^d+tz^d:z^d],\] and $P_t=(a, b)$ for fixed $a, b\in K$, then the set defined above is actually finite unless $a^d=b^d$. One might speculate, then, that the set defined in \eqref{eq:specialization} is finite in general when $N\geq 2$, unless the dynamics decomposes into lower-dimensional parts which are dependent in some strong sense.

In Section~\ref{sec:greens} we construct dynamical Green's function associated to polynomial morphisms $\PP^N\to\PP^N$ and show that these local heights are fairly uniform for polynomials which are ``the same at infinity.'' In Section~\ref{sec:proof} we study the variation of these heights in families, giving a proof of Theorem~\ref{th:main}.

\section{Green's Functions}\label{sec:greens}

Let $K$ be a field, and let $M_K$ be a set of valuations on $K$. For each $v\in M_K$, we denote by $\CC_v$ the smallest extension of $K$ which is both algebraically closed, and complete with respect to $v$.  By an \emph{$M_K$-constant $\gamma$} we mean a function $\gamma:M_K\to\RR$ such that $\gamma_v=0$ for all but a finite number of $v\in M_K$. A collection of functions $\phi_v:X(\CC_v)\to\RR$ will be called \emph{$M_K$-bounded} if there is an $M_K$-constant $\gamma$ such that $|\phi_v(t)|\leq \gamma_v$ for all $t\in X(\CC_v)$, for all $v\in M_K$; we will often denote $M_K$-boundedness by $\phi_v(t)=O_v(1)$, but observe that the $O_v(1)$ notation here implies both that the constant bound depends on $v$, and that it vanishes for almost all $v$. For convenience we note that if $\gamma_1, \gamma_2$ are $M_K$-constants then so are $\max\{\gamma_1, \gamma_2\}$, $\min\{\gamma_1, \gamma_2\}$,  and $\gamma_1+c\gamma_2$, for any $c\in \RR$.

 Let $f:\PP^N\to\PP^N$ be a morphism satisfying $f^*H=dH$, for the hyperplane $H=\{x_N=0\}$.  For a point in homogeneous coordinates \[P=[x_0:\cdots:x_N]\in\AA^N=\PP^N\setminus H,\] we define
\[\|P\|_v=\frac{\max\{|x_i|_v:0\leq i\leq N-1\}}{|x_N|_v},\]
which is independent of homogeneous scaling.

By a multi-index $I$ of dimension $N$ we mean a tuple $(i_0, ..., i_N)$, and we write $|I|=i_0+\cdots +i_N$. If $\mathbf{x}=(x_0, ..., x_N)$ is a tuple of variables, and $I=(i_0, ..., i_N)$ is a multi-index, then $\mathbf{x}^I$ is an abbreviation for the monomial $x_0^{i_0}\cdots x_N^{i_N}$. 
Given that $f^*H=dH$, we may assume without loss of generality that $f_N(\mathbf{x})=x_N^d$, and write the other components of $f$ in coordinates as
\[f_i(\mathbf{x})=\sum_{|I|=d} a_{i, I}\mathbf{x}^I.\]
Once this representation is fixed, we set
\[\basin_v(f)=\log^+\max\left\{|a_{i, I}|^{1/I_N}:0\leq i\leq N-1, I_N\neq 0\right\}.\]
We note, to aid the reader's intuition, that if $f$ is a monic polynomial in one variable, and $v$ is non-archimedean, then $\basin_v(f)=\log^+\max\{|\alpha|_v:f(\alpha)=0\}$.

 Following the standard convention, we will define a Green's function $G_{f}:\AA^N_{\CC_v}\to\RR$ by
\[G_{f, v}(P)=\lim_{n\to\infty} d^{-n}\log^+\|f^n(P)\|_v.\]
 It is easy to show that both of these limits always exist, and we present a few other properties of these functions.

\begin{lemma}\label{lem:greensprops}
Let $f:\PP^N\to \PP^N$ be a regular polynomial endomorphism defined over $K$.
 There exists an $M_K$-constant $\gamma$ such that if \[\log\|P\|_v>\basin_v(f)+ \gamma_{ v},\] then
\[G_{f, v}(P)=\log\|P\|_v+O_v(1).\]
Furthermore, both $\gamma$ and the implied $M_K$-constant above depend only on the restriction of $f$ to the invariant hyperplane.
\end{lemma}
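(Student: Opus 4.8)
The plan is to reduce everything to a one-step comparison and then telescope. Normalize a point $P=[x_0:\cdots:x_N]\in\AA^N$ so that $x_N=1$; then $\|P\|_v=\max_{0\le i\le N-1}|x_i|_v$ and, since $f_N=x_N^d$, also $\|f(P)\|_v=\max_{0\le i\le N-1}|f_i(x_0,\dots,x_{N-1},1)|_v$. Writing $f_i(\mathbf{x})=\sum_{|I|=d}a_{i,I}\mathbf{x}^I$, I would separate the monomials with $I_N=0$ — these are precisely the coordinates of the restriction $f|_H:\PP^{N-1}\to\PP^{N-1}$, which is a morphism of degree $d$ because $f$ is regular — from the monomials with $I_N\ge 1$. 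The claim to establish first is: there is an $M_K$-constant $\gamma$, depending only on $f|_H$, such that whenever $\log\|P\|_v>\basin_v(f)+\gamma_v$ one has $\log\|f(P)\|_v=d\log\|P\|_v+O_v(1)$, with the implied $M_K$-constant also depending only on $f|_H$.

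The upper bound in this claim is essentially immediate. The $I_N=0$ part of $f_i$ is bounded by $C_v\|P\|_v^d$, where $C_v$ is an $M_K$-constant assembled from the (finitely many) coefficients of $f|_H$. Each monomial $a_{i,I}\mathbf{x}^I$ with $I_N\ge 1$ has, after setting $x_N=1$, absolute value at most $|a_{i,I}|_v\|P\|_v^{d-I_N}\le\|P\|_v^{d}\bigl(e^{\basin_v(f)}/\|P\|_v\bigr)^{I_N}\le\|P\|_v^{d}e^{-\gamma_v}$, by the definition of $\basin_v(f)$ and the hypothesis on $\|P\|_v$; summing the at most $\binom{N+d}{d}$ such terms shows they contribute $O_v(\|P\|_v^d)$. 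For the lower bound I would invoke the standard morphism estimate applied to $f|_H$: since $f_0|_H,\dots,f_{N-1}|_H$ have no common zero off the origin, a Nullstellensatz argument produces homogeneous identities $x_j^e=\sum_i b_{ij}f_i|_H$ and hence an $M_K$-constant $c$, depending only on $f|_H$ (essentially the reciprocal of a resultant of $f|_H$, so equal to $1$ for all but finitely many $v$), with $\max_{i}|f_i|_H(\mathbf{x})|_v\ge c_v\|P\|_v^d$. Enlarging $\gamma_v$ so that in the region in question the $I_N\ge 1$ monomials sum to less than $\tfrac12 c_v\|P\|_v^d$ — a requirement of the form $\gamma_v\ge(\text{$M_K$-constant from }f|_H)$ — we get $\|f(P)\|_v\ge\tfrac12 c_v\|P\|_v^d$, which combined with the upper bound gives the stated one-step comparison.

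Next I would enlarge $\gamma$ once more so that, using $d\ge2$, the inequality $\log\|f(P)\|_v>\basin_v(f)+\gamma_v$ also holds; then by induction the whole forward orbit of $P$ stays in the good region and all the $\log^+\|f^n(P)\|_v$ are genuine logarithms. Telescoping, $\bigl|d^{-(n+1)}\log\|f^{n+1}(P)\|_v-d^{-n}\log\|f^n(P)\|_v\bigr|\le d^{-(n+1)}\delta_v$, where $\delta_v$ is the $M_K$-constant from the one-step comparison; summing the geometric series and letting $n\to\infty$ gives $|G_{f,v}(P)-\log\|P\|_v|\le\delta_v/(d-1)$. Every constant produced this way depends only on $f|_H$, which is the last assertion.

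I expect the only real subtlety — the main obstacle — to be bookkeeping: verifying that each constant introduced (the upper-bound constant $C_v$, the Nullstellensatz lower-bound constant $c_v$, and the successive enlargements of $\gamma_v$) is a genuine $M_K$-constant, i.e. vanishes for all but finitely many $v$, and genuinely depends only on $f|_H$ and not on the monomials of $f$ that involve $x_N$. The point that makes this work is that the ``$x_N$-monomials'' of $f$ never enter the constants; they are controlled uniformly, at every place, by $\basin_v(f)$ together with the hypothesis $\log\|P\|_v>\basin_v(f)+\gamma_v$.
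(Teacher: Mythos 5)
Your proposal is correct and follows essentially the same route as the paper's proof: isolating the $I_N=0$ part (the coordinates of $f|_H$), using the hypothesis $\log\|P\|_v>\basin_v(f)+\gamma_v$ to make the $I_N\geq1$ monomials negligible, invoking a Nullstellensatz/resultant bound on $f|_H$ for the lower bound, assembling a one-step estimate $\log\|f(P)\|_v=d\log\|P\|_v+O_v(1)$, enlarging $\gamma$ so the orbit remains in the good region, and telescoping. The only cosmetic difference is that you normalize $x_N=1$ at the outset, whereas the paper carries the $|P_N|$ factor explicitly and divides it out when forming $\|f(P)\|_v$.
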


\begin{proof}
Write the coordinate functions of $f$ as  $f_i=\sum_{|I|=d} a_{i, I}\mathbf{x}^I\in K[\mathbf{x}]$, as above, and let $g_i(\mathbf{x})=\sum_{I_N=0}a_{i, I}\mathbf{x}^I$. In other words, $g_i(\mathbf{x})= f_i(x_0, ..., x_{N-1}, 0)$, the highest-degree homogeneous summand of $f_i(x_0, ..., x_{N-1}, 1)$, and quantities which depend only on the $g_i$ depend only on the restriction of $f$ to $H$.

We will choose our $M_K$ constant $\gamma$ by insisting first that $\gamma_v\geq 0$ for all $v$, and then increasing the value of $\gamma_v$ as necessary. Let $P_0, ..., P_N\in\CC_v$ with $P_N\neq 0$, let $P=[P_0:\cdots :P_N]$, and for a multi-index $I$ write $P^I$ for $\prod P_k^{I_k}$.

If we have $\log\|P\|_v>\basin_v(f)+\gamma_v$, then for any $i$ and any $I$ with $I_N\neq 0$, we have
\[\log|a_{i, I}|_v\leq\log|a_{i, I}|_v+\gamma_v\leq \log|a_{i, I}|_v+I_N\gamma_v<I_N\log\|P\|_v.\]
It follows from this that
\begin{eqnarray*}
\log |a_{i, I}P^I|_v&\leq&\log|a_{i, I}|_v+(d-I_N)\log\max\{|P_0|_v, ..., |P_{N-1}|_v\}+I_N\log|P_N|_v\\
&\leq& d\log|P_N|_v+(d-I_N)\log\|P\|_v+\log |a_{i, I}|_v\\
&<&d\log\|P\|_v+d\log |P_N|_v-\gamma_v.
\end{eqnarray*}
Now let $\var_{1, v}=\log\binom{n+d-1}{d-1}$ if $v$ is archimedean, and  $\var_{1, v}=0$ otherwise.
For each $i$, still subject to $\log\|P\|_v>\basin_v(f)+\gamma_v$, we have
\begin{eqnarray}
\log|f_i(P)-g_i(P)|_v&=&\log\left|\sum_{I_N\neq 0}a_{i, I}P^I\right|_v\nonumber\\
&\leq& \log\max_{I_N\neq 0}|a_{i, I}P^I|_v+\var_{1, v}\nonumber\\
&<&d\log\|P\|_v+d\log|P_N|_v-\gamma_v+\var_{1, v}.\label{eq:effgee}
\end{eqnarray}

By a standard argument from the Nullstellensatz \cite[?]{hind-silv}, there exists an $M_K$-constant $\var_2$ (depending only on the $g_j$) such that
\begin{equation}\label{eq:geenull}\left|d\log\max\{|P_0|_v, ..., |P_{N-1}|_v\}- \log\max\{|g_0(P)|_v, ..., |g_{N-1}(P)|_v\}\right|\leq \var_{2, v}.\end{equation}
It follows that, for some $0\leq i\leq N-1$,
\begin{equation}\label{eq:geeeye}\log|g_i(P)|_v\geq d\log\|P\|_v+d\log|P_N|-\var_{2, v}.\end{equation}
Now let $\var_{3, v}=\log 2$ if $v$ is archimedean, and $\var_{3, v}=0$ otherwise. Combining \eqref{eq:effgee} with \eqref{eq:geeeye}, we see that for the index $i$ in \eqref{eq:geeeye} we have
\[\log|g_i(P)|_v-\log|f_i(P)-g_i(P)|_v-\var_{3 ,v}> \gamma_v-\var_{1, v}-\var_{2, v}-\var_{3, v},\]
and we increase $\gamma$ to ensure that the right-hand-side is never negative. This now implies $|g_i(P)|_v>e^{\var_{3, v}}|f_i(P)-g_i(P)|_v$, which by the triangle inequality gives 
\[|f_i(P)|_v\geq\frac{1}{e^{\var_{3, v}}}|g_i(P)|_v.\]
It follows that
\begin{eqnarray*}
\log\|f(P)\|_v&\geq& \log|g_i(P)|_v-\var_{3, v}-\log|P_N^d|_v\\
&\geq& d\log\|P\|_v-\var_{2, v}-\var_{3, v}.
\end{eqnarray*}
At the same time, applying \eqref{eq:geenull} in the other direction gives 
\begin{eqnarray*}
\log\|f(P)\|_v&=&\max\{|f_j(P)|_v\}-d\log |P_N|_v\\
&\leq&\max\{\log|f_j(P)-g_j(P)|_v, \log|g_j(P)|_v\} +\var_{3, v}-d\log|P_N|_v\\ 
&\leq& d\log\|P\|_v+\var_{3, v}+\max\{\var_{1, v}-\gamma_v, \var_{2, v}\}.
\end{eqnarray*}

Now set
\[\var_{4, v}=\max\{\var_{3, v}+\max\{\var_{1, v}-\gamma_v, \var_{2, v}\}, \var_{2, v}+ \var_{3, v}\}.\]
Note that $\var_4$ is an $M_K$-constant, and since $\var_1$, $\var_2$, and $\var_3$ are independent of $\gamma$, increasing the value of $\gamma$ does not increase the value of $\var_4$.  (Indeed, since $\gamma\geq 0$ by our initial choice, we might just as easily chosen something like $\var_4=\var_3+\max\{\var_1, \var_2, 0\}$.)

We thus have an $M_K$-constant $\var_4$ depending only on the restriction of $f$ to $H$ such that for all $v$,
\begin{equation}\label{eq:mainforeff}-\var_{4, v}\leq\log\|f(P)\|_v-d\log\|P\|_v\leq \var_{4, v}\end{equation}
whenever $\log\|P\|_v>\basin_v(f)+\gamma_v$. If we suppose that $\gamma\geq \var_4/(d-1)$, then this means $\log\|P\|_v>\basin_v(f)+\gamma_v$ also implies \[\log\|f(P)\|_v\geq d\log\|P\|_v-\var_{4, v}>d\basin_v(f)+d\gamma_v-\var_{4, v}>\basin_v+(f)\gamma_v,\]
and so we may apply iteration to the estimate \eqref{eq:mainforeff}.
A standard telescoping sum argument gives
\begin{eqnarray*}
\left|G_{f, v}(P)-\log\|P\|_v\right|&\leq&\sum_{n=0}^\infty\left|d^{-(n+1)}\log\|f^{n+1}(P)\|_v-d^{-n}\log\|f^n(P)\|_v\right|
\\&\leq& \frac{\var_{4, v}}{d}+\frac{\var_{4, v}}{d^2}+\frac{\var_{4, v}}{d^3}+\cdots\\
&=&\frac{1}{d-1}\var_{4, v}.
\end{eqnarray*}
This proves the lemma.
\end{proof}

If $K$ is a global field and $M_K$ is its set of places, then as usual \cite{call-silv} we define the canonical height on $\PP^N$ associated to $f$ by
\[\hat{h}_f(P)=\lim_{n\to\infty}d^{-n}h(f^n(P)).\]
It is easy to show that, for any finite extension $L/K$ and any $P\in\AA^N(L)$, we have
\[\hat{h}_f(P)=\sum_{v\in M_L}\frac{[L_v:K_v]}{[L:K]} G_{f, v}(P).\]
Specifically, if we have, for every place $v$ of $L$, that $\log^+\|P\|_v=0$ or $\log\|P\|_v>\basin_v(f)+ \gamma_v$, then by Lemma~\ref{lem:greensprops} we have that
\[h_{\PP^N, H}(P)=\sum_{v\in M_L}\frac{[L_v:K_v]}{[L:K]}\log\|P\|_v=\sum_{v\in M_L}\frac{[L_v:K_v]}{[L:K]}G_{f, v}(P)+O(1),\]
where the implied constant depends on $f$ but not on $P$.  But the antecedent condition is preserved by the application of $f$, and hence
\[d^{-n}h(f^n(P))=\sum_{v\in M_L}\frac{[L_v:K_v]}{[L:K]}G_{f, v}(P)+O(d^{-n}),\]
where the implied constant is independent of $n$. Taking $n\to\infty$ eliminates the constant. The assumption that  $\log^+\|P\|_v=0$ or $\log\|P\|_v>\basin_v(f)+ \gamma_v$ for each $v$ can now be removed, since it holds for $f^m(P)$ when $m$ is sufficiently large.

\section{Local heights}\label{sec:lemmas}

With local results now in hand, we establish the technical machinery needed for the proof of the main result. Throughout we work over two global fields, namely $K$, a number field, and the function field of a curve $X/K$. We will write $M_K$ for the set of places of $K$, and we will assume that for each $v\in K$ we have chosen a complete, algebraically closed extension $\CC_v$. We will let $F=\overline{K}(X)$ be the function field of $X$ over $\overline{K}\subseteq\CC_v$. Note that, if we take $X$ to be a smooth complete model of itself, the places $M_F$ of $F$ correspond exactly to points $\beta\in X(\overline{K})$. We normalize the absolute value associated to $\beta$ such that for any $\phi\in F$, $-\log|\phi|_\beta$ is the order to which $\phi$ vanishes at $\beta$.

%By a \emph{distance function} $w_\beta$ at the point $\beta\in X(\overline{K})$ we mean an algebraic function of the form $w_\beta=u^{1/n}$, where $u\in F$ is a rational function of degree $n$, vanishing nowhere but at $\beta$. Strictly speaking, $w_\beta$ is not a well-defined function, but $|w_\beta|_v$ is for any $v\in M_K$, and these are the values we will consider. By the Riemann-Roch Theorem, we may always find a function of some degree vanishing only at $\beta$. For instance, if $X=\PP^1$ and $\beta\neq \infty$, we may simply take $w_{\beta}(t)=t-\beta$. 

We recall from \cite{lang} the definition of a local height with respect to a Cartier divisor $D\in \Div(X)$, which is extended to $\Div(X)\otimes \QQ$ by linearity. A \emph{Cartier divisor} is represented by a collection of pairs $(U_i, g_i)$, where the $U_i$ offer a finite cover of $X$ by affine open sets, and the $g_i$ are rational functions on the $U_i$ subject to the condition that $g_i/g_j$ and $g_j/g_i$ are both regular on $U_i\cap U_j$. This allows us to define, unambiguously, $\ord_\beta(D)=-\log|g_i|_\beta$ for any $i$ with $\beta\in U_i$, and the Weil divisor described by this data is $\sum_{\beta\in X} \ord_\beta(D)[\beta]$.

  A \emph{N\'{e}ron divisor} adds a collection of functions $\alpha_{i, v}:U_i(\CC_v)\to \RR$ which are continuous and locally $M_K$-bounded, and the \emph{local height} associated to this data is given by
\[\lambda_{X, D, v}(t)=-\log|g_i(t)|_v+\alpha_{i, v}(t).\]
The compatibility conditions are set out so that this is well-defined. Although the local height function depends on the N\'{e}ron data associated to the Cartier divisor, one can show \cite[Chapter~10]{lang} both that every Cartier divisor admits such a structure, and that changing the N\'{e}ron divisor associated to a given Cartier divisor will perturb the local height only by an $M_K$-bounded amount.

%say $D=\sum m_\beta [\beta]$, then the $v$-adic local height with respect to $D$ will be
%\[\lambda_{X, D, v}(t)=\sum_{\beta\in X}m_\beta \log^+|w_{\beta}(t)^{-1}|_v.\]
%Note that this definition depends on the choice of distance functions, but it is a standard fact that the difference between local heights defined by two different distance functions is $M_K$-bounded. Indeed, one should more properly think of a collection of local heights as an $M_K$-boundedness equivalence class.

The main theorem of this paper is almost an immediate consequence of the following local result, although in the next section we will see a few technical details emerge. Before stating the theorem, we explain the notation somewhat. Since $M_F$ corresponds to the set of $\overline{K}$-points on $X$, we have for each $\beta\in X(\overline{K})$ a value $G_{f_\eta, \beta}(P_eta)$, where we simplify notation by confusing the point and the corresponding valuation. 
\begin{theorem}\label{th:local}
Let $K$, $X$, $f$, $P$ be as above,, let
\begin{equation}\label{eq:Ddef}D=\sum_{\beta\in X}G_{f_\eta, \beta}(P_\eta)[\beta]\in\Div(X)\otimes \QQ,\end{equation}
 let $U\subseteq X$ an affine open set on which the coordinates of $P$ and the coefficients of $f$ are regular. Then  we have
\begin{equation}\label{eq:mainlocal}G_{f_t, v}(P_t)=\lambda_{X, D, v}(t)+O_v(1)\end{equation}
for all $v\in M_K$ and all $t\in U(\CC_v)$ (where the implied constant vanishes for all but finitely many $v$).
\end{theorem}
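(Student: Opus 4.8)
The plan is to prove Theorem~\ref{th:local} by comparing the dynamical Green's function $G_{f_t,v}(P_t)$ against a local height for the divisor $D$, exploiting the uniformity in Lemma~\ref{lem:greensprops} that the family is fibral provides. First I would fix the affine open set $U$ on which the coefficients $a_{i,I}$ of $f$ and the homogeneous coordinates of $P$ are regular, and on which (shrinking $U$ if necessary) the coordinate $x_N$ of $P$ is a unit, so that $\log\|P_t\|_v$ makes sense as a continuous function of $t\in U(\CC_v)$. The key structural input is that fibrality forces the restriction of $f_t$ to $H$ to be \emph{constant} in $t$; hence the $M_K$-constant $\gamma$ and the implied $M_K$-constant in Lemma~\ref{lem:greensprops} can be chosen \emph{once and for all}, independent of $t$. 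This is precisely what makes the error term $O_v(1)$ rather than something growing with $t$.

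Next I would split the analysis according to whether $t$ lies in the "good" region where $\log\|P_t\|_v > \basin_v(f_t) + \gamma_v$ or not. In the good region, Lemma~\ref{lem:greensprops} gives directly $G_{f_t,v}(P_t) = \log\|P_t\|_v + O_v(1)$. I would then identify $\log\|P_t\|_v$ with a local height: writing $\|P_t\|_v = \max_i|x_i(t)|_v/|x_N(t)|_v$ and recalling that $D = \sum_\beta G_{f_\eta,\beta}(P_\eta)[\beta]$, one checks that for $\beta \notin U$ the coefficient $G_{f_\eta,\beta}(P_\eta)$ records exactly the order of pole/zero behaviour of $\|P\|$ at $\beta$ coming from the generic Green's function, so that $\log\|P_t\|_v$ differs from $\lambda_{X,D,v}(t)$ by a locally $M_K$-bounded, continuous term — i.e., a legitimate choice of N\'eron data for $D$ on $U$. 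Here one uses the nonarchimedean description of $G_{f_\eta,\beta}$ at places $\beta$ where $\log\|P_\eta\|_\beta$ is large (so $G_{f_\eta,\beta}(P_\eta) = \log\|P_\eta\|_\beta$ exactly, by Lemma~\ref{lem:greensprops} applied over $F$), together with the fact that $G_{f_\eta,\beta}$ is continuous and bounded elsewhere. The upshot is that on the good region \eqref{eq:mainlocal} holds.

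For the complementary region — where $\log\|P_t\|_v \le \basin_v(f_t) + \gamma_v$ — I would argue that this is a compact (in the archimedean case) or bounded-from-the-relevant-reduction (nonarchimedean) subset of $U(\CC_v)$ on which both sides of \eqref{eq:mainlocal} are bounded: $G_{f_t,v}(P_t)$ because it is the uniform limit of the continuous functions $d^{-n}\log^+\|f_t^n(P_t)\|_v$ and these are controlled once $\log\|P_t\|_v$ is bounded (using the $\basin_v$ bound and the first few iterates), and $\lambda_{X,D,v}(t)$ because it is continuous on $U(\CC_v)$ away from $\Supp(D)\cap U$, while near those points one passes to a large iterate $f_t^m(P_t)$, which moves the point into the good region, and uses the functional equation $G_{f_t,v}(P_t) = d^{-m}G_{f_t,v}(f_t^m(P_t))$ together with the corresponding near-equivariance of $\lambda_{X,D,v}$ under the divisor relation $f_\eta^* D = d D + (\text{stuff supported at poles of coefficients})$. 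The archimedean case needs a genuine compactness/continuity argument; the nonarchimedean case follows from the explicit estimates in the proof of Lemma~\ref{lem:greensprops}, which are already $M_K$-constant and vanish for almost all $v$.

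The main obstacle I anticipate is the bookkeeping at the places $\beta \in X(\overline K)$ lying outside $U$ — or more precisely, showing that the "naive" function $\log\|P_t\|_v$ really is a local height for exactly the divisor $D$ defined in \eqref{eq:Ddef}, with a \emph{globally} $M_K$-bounded (not just $v$-by-$v$ bounded) discrepancy, and that this discrepancy glues correctly across the charts defining the Cartier divisor. This requires knowing that $G_{f_\eta,\beta}(P_\eta)$ computes the exact local contribution at $\beta$, which in turn rests on Lemma~\ref{lem:greensprops} over the function field $F$ and on the fibrality hypothesis so that the constant $\gamma$ does not itself vary with $\beta$ in an uncontrolled way. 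Once that identification is pinned down, combining the good-region estimate with the boundedness on the complementary region — and invoking that changing N\'eron data perturbs $\lambda_{X,D,v}$ by only an $M_K$-bounded amount — yields \eqref{eq:mainlocal} for all $t\in U(\CC_v)$ and all $v$.
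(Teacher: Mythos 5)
Your overall decomposition into a ``good'' region ($\log\|P_t\|_v > \basin_v(f_t) + \gamma_v$) and its complement, and your correct observation that fibrality makes the constants in Lemma~\ref{lem:greensprops} uniform over $t$, are both in the spirit of the paper's proof. The good-region argument you sketch roughly corresponds to the paper's Lemma~\ref{lem:nearpoles}. But there is a genuine gap in how you treat the complementary region, and a related misstatement about $\log\|P_t\|_v$ being a local height for $D$.

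The difficulty lives at points $\beta\in X$ where some coordinate of $P$ or some coefficient of $f$ has a pole, but nonetheless $G_{f_\eta,\beta}(P_\eta)=0$, so $\beta\notin\Supp(D)$. At such a $\beta$, $\log\|P_\eta\|_\beta$ can be positive, so $\log\|P_t\|_v\to\infty$ as $t\to\beta$, while $\lambda_{X,D,v}(t)$ stays bounded there; hence your claim that ``$\log\|P_t\|_v$ differs from $\lambda_{X,D,v}(t)$ by a locally $M_K$-bounded, continuous term'' is false on $U(\CC_v)$. Moreover, such points $\beta$ lie in the closure of your complementary region, on which both $\log\|P_t\|_v$ and $\basin_v(f_t)$ are unbounded (only their difference is controlled), so the complementary region is \emph{not} compact in $U(\CC_v)$ and a compactness-of-continuous-functions argument does not close the case. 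The suggestion of iterating to escape into the good region also fails: if $G_{f_\eta,\beta}(P_\eta)=0$, the generic orbit stays in the filled Julia set at $\beta$, so no iterate pushes you into the good region over the function field. (Also, the relation $f_\eta^*D = dD + \cdots$ does not typecheck: $D$ is a divisor on $X$, not on $\PP^N$.) The paper's Lemma~\ref{lem:badpoints} is exactly the missing ingredient; it uses a Laurent series expansion of the coefficients and coordinates at $\beta$, a maximum-modulus estimate on a shell $\{|\pi(z)|_v=\epsilon\}$, and a two-stage limiting argument ($\delta\to 0$ and then $\epsilon'\to 1$) to show $G_{f_t,v}(P_t)$ is bounded near $\beta$, together with a $v$-integrality argument to see that the bound vanishes for almost all $v$. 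Without something playing this role, your outline does not establish $M_K$-boundedness near these points, and hence does not prove \eqref{eq:mainlocal}.
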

We restrict to an affine open subscheme of $X$ since one or both of the sides of \eqref{eq:mainlocal} might be undefined at points where some coordinate of $P$ or some coefficient of $f$ is not regular.
 Also, it is worth noting that it is not \emph{a priori} obvious that $G_{f_\eta, \beta}(P_\eta)\in \QQ$ for all $\beta$, a claim which would be false at archimedean places, but it is evident from the proof that this is indeed the case.

Note that, as stated, the theorem applies when $f$ and $P$ are defined over the number field $K$, but the techniques are entirely local. If one has $f$ and $P$ defined over $\CC_v$ then the same estimate holds, although of course only for the place $v$.

The proof of this comes in a sequence of lemmas. Roughly speaking, Lemma~\ref{lem:nearpoles} establishes an inequality of the form~\eqref{eq:mainlocal} in a neighbourhood of each point in the support of the divisor $D=D(f, P)$ defined in \eqref{eq:Ddef}. Since $\lambda_{X, D, v}$ will be bounded away from this support, it then suffices to show that $G_{f_t, v}(P_t)$ is as well (in the sense of $M_K$-boundedness). Lemma~\ref{lem:badpoints} establishes this in neighbourhoods of any points not in the support of $D$, but at which some coordinate of $P$ or some coefficient of $f$ has a pole. Once a neighbourhood of every such point has been removed, it is easy to show that $G_{f_t, v}(P_t)$ is appropriately bounded on the remainder of $X(\CC_v)$, and this is the content of Lemma~\ref{lem:elsewhere}.

\begin{proof}[Proof of Theorem~\ref{th:local}]
We will start by noting that we may replace $P$ by $f(P)$ if convenient, since we have \[G_{f_t, v}(f_t(P_t))=dG_{f_t, v}(P_t)\] and \[D(f, f(P))=dD(f, P),\] whence \[\lambda_{X, D(f, f(P)), v}=d\lambda_{X, D(f, P), v}.\] In other words, proving Theorem~\ref{th:local} for $f(P)$ instead of for $P$ only decreases the implied constant.
 Without loss of generality, then, we may assume that for any $\beta\in X$ we have either $G_{f_\eta, \beta}(P_\eta)=0$ or $\log\|P_\eta\|_\beta>\basin_\beta(f_\eta)$, from which Lemma~\ref{lem:greensprops} gives $G_{f_\eta, \beta}(P_\eta)=\log\|P_\eta\|_\beta$. We will also assume, since the quantities defined are independent of the choice of homogeneous coordinates, that $P_N=1$, and that $D$ is supported entirely on $K$-rational points (this last step amounting just to a finite extension of the ground field).

Note that it suffices to prove the estimate \eqref{eq:mainlocal} on a arbitrary affine open $V\subseteq U$, since $X$ is quasi-compact, so we will assume that $D$ is given locally on $U$ by $g_U=0$, where $g_U\in\Ocal(U)$ is regular. 

For each $\beta\in X(K)$ we will also fix a function $w_\beta$ with no poles other than at $\beta$.  Given this, we will define
\[\mathcal{D}_v(\beta; r, R)=\left\{t\in X(\CC_v):r<\frac{1}{\deg(w_\beta)}\log |w_\beta(t)^{-1}|_v<R\right\},\]
the open annulus at $\beta$ of inner logarithmic radius $r$ and outer logarithmic radius $R$. For brevity we will set $\mathcal{D}_v(\beta; r)=\mathcal{D}_v(\beta; -\infty, r)$, and we note (to forestall possible confusion) that the radii are logarithmic.

\begin{lemma}\label{lem:allthesame}
For any $\phi\in K(X)$ there is an $M_K$-constant $\gamma$ such that
\begin{equation}\label{eq:allthesame}\log^+|\phi(t)|_v=\frac{\log^+|\phi|_\beta}{\deg(w_\beta)}\log^+|w_\beta(t)|_v+O_v(1)\end{equation} for all $t\in \mathcal{D}_v(\beta; \gamma_{ v})$.
\end{lemma}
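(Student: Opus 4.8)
The plan is to reduce the statement to a local computation near $\beta$ on a complete smooth model of $X$, using that $\phi$ and $w_\beta$ are rational functions whose zeros and poles are finite in number. First I would observe that $\phi$ and $w_\beta^{-1}$, viewed on the punctured neighbourhood of $\beta$, have a well-defined order of vanishing there, namely $-\log|\phi|_\beta$ and $\deg(w_\beta)$ respectively (the latter by choice of $w_\beta$, which has a pole only at $\beta$, and whose divisor of zeros has degree $\deg(w_\beta)$ but those zeros lie outside a small enough annulus). Dividing the target quantity into the two cases $\log^+|\phi|_\beta = 0$ and $\log^+|\phi|_\beta > 0$, the first case asserts that $\log^+|\phi(t)|_v = O_v(1)$ near $\beta$, i.e. that $\phi$ is bounded in a neighbourhood of $\beta$; this holds precisely because $\phi$ has no pole at $\beta$, and after shrinking the annulus (choosing $\gamma_v$ large) we avoid all finitely many poles of $\phi$, on a relatively compact annulus at archimedean $v$ and on the whole residue disc at non-archimedean $v$.

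For the second case, where $\ord_\beta(\phi) = -\log^+|\phi|_\beta =: -m < 0$, the function $\psi = \phi \cdot w_\beta^{m/\deg(w_\beta)}$ — more precisely $\phi^{\deg(w_\beta)} w_\beta^{m}$, to stay with honest rational functions — has order $0$ at $\beta$, hence is a unit there. I would then argue that $\log|\psi(t)|_v = O_v(1)$ for $t$ in a sufficiently small annulus at $\beta$, again by excluding the finitely many other zeros and poles of $\psi$ and invoking compactness (archimedean) or the maximum/ultrametric principle on a residue disc (non-archimedean). Unwinding, $\deg(w_\beta)\log|\phi(t)|_v = -m\log|w_\beta(t)|_v + O_v(1)$, and since on $\mathcal{D}_v(\beta;\gamma_v)$ with $\gamma_v$ chosen so that $\log|w_\beta(t)^{-1}|_v > 0$ we have $\log^+|w_\beta(t)|_v = 0$ — wait, that sign needs care: on the inner annulus $w_\beta^{-1}$ is large, so $w_\beta$ is small, and indeed $\log^+|w_\beta(t)|_v = 0$ there, while $\log^+|\phi(t)|_v$ should be large. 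The resolution is that the inner annulus $\mathcal{D}_v(\beta;\gamma_v) = \mathcal{D}_v(\beta;-\infty,\gamma_v)$ as defined allows $\log|w_\beta(t)^{-1}|_v$ to be negative, i.e. $w_\beta(t)$ large, i.e. $t$ near the poles of $w_\beta$ — so I must instead take $\gamma_v$ to be a lower cutoff and reinterpret, or simply restrict to the sub-annulus where $\log|w_\beta^{-1}|_v$ is large and positive, which is where $t\to\beta$. The cleanest fix is to note $\log^+|\phi(t)|_v$ is eventually $= \frac{m}{\deg(w_\beta)}\log|w_\beta(t)^{-1}|_v + O_v(1)$ and that the right side of the claimed identity, $\frac{m}{\deg(w_\beta)}\log^+|w_\beta(t)|_v$, equals $0$ only when $t$ avoids the poles of $w_\beta$; so I should choose the model and $w_\beta$ and $\gamma_v$ so that on $\mathcal{D}_v(\beta;\gamma_v)$ the point $t$ is genuinely close to $\beta$, making both sides behave correctly, and absorb the bounded discrepancy.

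Concretely, the steps in order: (1) pass to a smooth complete model and fix $w_\beta$; (2) list the finitely many zeros and poles of $\phi$ and of $w_\beta$ other than $\beta$ itself, and choose $\gamma_v$ (an $M_K$-constant, zero for all but finitely many $v$) large enough that $\mathcal{D}_v(\beta;\gamma_v)$ contains none of them — using that for non-archimedean $v$ outside a finite set the residue disc at $\beta$ contains no other special point, and for the remaining $v$ one shrinks; (3) in the case $\ord_\beta(\phi)\geq 0$, conclude $\phi$ is $M_K$-bounded on these annuli by continuity/compactness (archimedean) and the ultrametric inequality comparing $|\phi(t)|_v$ with its reduction (non-archimedean), giving the $O_v(1)$ statement; (4) in the case $\ord_\beta(\phi) = -m < 0$, apply step (3) to the rational function $\phi^{\deg(w_\beta)}w_\beta^m$, which has order $0$ at $\beta$; (5) combine and verify the $\log^+$ bookkeeping, using that on the chosen annulus $|w_\beta(t)|_v < 1$ so $\log^+|w_\beta(t)|_v = 0$ while $|\phi(t)|_v > 1$ so $\log^+|\phi(t)|_v = \log|\phi(t)|_v$. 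The main obstacle is the uniformity in $v$: making precise that the bound is an $M_K$-constant requires knowing that for all but finitely many non-archimedean $v$, the divisors of $\phi$ and $w_\beta$ reduce nicely so that the residue disc at $\beta$ is ``clean,'' and that on such a disc a unit has constant absolute value — this is where a reduction/good-model argument is needed, and it is the step I would spend the most care on; the archimedean estimates are then routine compactness.
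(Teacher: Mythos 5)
Your central idea --- form a rational function with order $0$ at $\beta$ by combining a power of $\phi$ with a power of $w_\beta$, then use that a unit at $\beta$ is $M_K$-bounded in a small annulus --- is exactly the paper's approach. But the implementation has a sign error that is fatal as written, and it stems from a misreading of the geometry of $w_\beta$. Since $w_\beta$ is required to have no poles \emph{other than at} $\beta$, it has a pole of order $\deg(w_\beta)$ \emph{at} $\beta$, so $\ord_\beta(w_\beta) = -\deg(w_\beta)$. Consequently $\phi^{\deg(w_\beta)}w_\beta^m$ has order $-2m\deg(w_\beta)$ at $\beta$, not $0$; the unit you want is $\phi^{\deg(w_\beta)}/w_\beta^m$. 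With the corrected function, the unpacked identity reads $\deg(w_\beta)\log|\phi(t)|_v = +\,m\log|w_\beta(t)|_v + O_v(1)$, not $-m\log|w_\beta(t)|_v + O_v(1)$.

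This sign error is also what produced your struggle with the $\log^+$ bookkeeping in step (5). Near $\beta$, both $|\phi(t)|_v$ and $|w_\beta(t)|_v$ are \emph{large} (both have poles there), so both $\log^+$ signs are redundant and the claimed identity equates two quantities that grow in tandem as $t\to\beta$ --- no contradiction. Your assertion that on the chosen annulus $|w_\beta(t)|_v<1$ so $\log^+|w_\beta(t)|_v=0$ is backwards, and if it were true the right-hand side of the lemma would be identically $O_v(1)$ while the left blows up, i.e.\ the lemma would be false. Note also that $\frac{1}{\deg(w_\beta)}\log|w_\beta(t)^{-1}|_v = -\frac{1}{\deg(w_\beta)}\log|w_\beta(t)|_v \to -\infty$ as $t\to\beta$, so $\mathcal{D}_v(\beta; r) = \mathcal{D}_v(\beta;-\infty,r)$ for $r$ sufficiently small (typically negative) is indeed a shrinking punctured neighbourhood of $\beta$; your ``sub-annulus where $\log|w_\beta^{-1}|_v$ is large and positive, which is where $t\to\beta$'' inverts this. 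The remaining machinery you describe (good-reduction uniformity so the $O_v(1)$ is an $M_K$-constant; treating the $\ord_\beta(\phi)\geq 0$ case separately) is in the right spirit and matches the paper; once the pole-versus-zero orientation of $w_\beta$ and the sign on the auxiliary function are fixed, the argument closes.
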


\begin{proof}
First we note that if $\phi$ has no pole at $\beta$, then this reduces to the standard fact that $\phi$ is bounded in a $v$-adic neighbourhood. We will assume that $\phi$ has a pole, and hence that $\log^+|\phi|_\beta$ is the order of this pole.

Now,
\[\log\left|\frac{\phi^{\deg(w_\beta)}}{w_\beta^{\log^+|\phi|_\beta}}\right|=O_v(1)\]
on $\mathcal{D}_v(\beta; \var_{ v})$ for some $M_K$-constant $\var$, since the function on the left is regular at $\beta$. Unpacking, this becomes 
 \begin{equation}\label{eq:noplus}\log|\phi(t)|_v=\frac{\log^+|\phi|_\beta}{\deg(w_\beta)}\log|w_\beta(t)|_v+O_v(1).\end{equation}
 Since the implied constant vanishes for all but finitely many places, we may choose an $M_K$-constant $\gamma\leq \var$ so that the right-hand-side is non-negative for all $t\in \mathcal{D}_v(\beta; \gamma_{v})$, eliminating the distinction between \eqref{eq:noplus} and \eqref{eq:allthesame}.
\end{proof}

\begin{lemma}\label{lem:localheight}
For any $\beta\in X$, there is an $M_K$-constant $\gamma$ such that
\[\lambda_{X, D, v}(t)=\frac{\ord_\beta(D)}{\deg(w_\beta)}\log^+|w_\beta(t)|_v+O_v(1)\]
on $\mathcal{D}_v(\beta; \gamma_{ v})$.
\end{lemma}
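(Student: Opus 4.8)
The plan is to localize the local height at $\beta$ using explicit N\'eron data and then feed the rational function occurring in the Cartier data into Lemma~\ref{lem:allthesame}; the arithmetic content is already extracted there, so what remains is assembly plus a uniformity check.

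First I would clear denominators: write $D=\tfrac1m D'$ with $D'\in\Div(X)$ an honest integral Cartier divisor, so that $\lambda_{X,D,v}=\tfrac1m\lambda_{X,D',v}$ and $\ord_\beta(D)=\tfrac1m\ord_\beta(D')$, and it suffices to treat $D'$. Choose a chart $(U_i,g_i)$ of the N\'eron data for $D'$ with $\beta\in U_i$, so that $\lambda_{X,D',v}(t)=-\log|g_i(t)|_v+\alpha_{i,v}(t)$ for $t\in U_i(\CC_v)$, where $\ord_\beta(g_i)=\ord_\beta(D')$ (and in the situation of Theorem~\ref{th:local}, where $D'$ is effective near $\beta$, we may even take $g_i$ regular at $\beta$, so $\ord_\beta(g_i)\geq 0$).

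The main term is handled by applying Lemma~\ref{lem:allthesame} to $\phi=1/g_i\in K(X)$, whose order of pole at $\beta$ equals $\ord_\beta(D')$ (interpreted as $0$ when $g_i$ is a unit at $\beta$). This gives
\[\log^+|g_i(t)^{-1}|_v=\frac{\ord_\beta(D')}{\deg(w_\beta)}\log^+|w_\beta(t)|_v+O_v(1)\]
on $\mathcal{D}_v(\beta;\gamma_v)$ for some $M_K$-constant $\gamma$. One then upgrades $\log^+|g_i(t)^{-1}|_v$ to $-\log|g_i(t)|_v$: when $\ord_\beta(D')>0$ this is automatic on a slightly smaller annulus, since $g_i$ vanishes at $\beta$ and hence $|g_i(t)|_v\leq 1$ there, the threshold being again an $M_K$-constant (by a further application of the same lemma); and when $\ord_\beta(D')=0$ all of $-\log|g_i(t)|_v$, $\log^+|g_i(t)^{-1}|_v$, and the right-hand side are $O_v(1)$.

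It remains to absorb $\alpha_{i,v}$. Here I would invoke that the N\'eron functions are continuous and locally $M_K$-bounded: $\alpha_{i,v}$ is $M_K$-bounded on $W(\CC_v)$ for some Zariski open $W\ni\beta$, and since $X\setminus W$ consists of finitely many points distinct from $\beta$, on each of which $w_\beta$ is regular, there is an $M_K$-constant $\gamma'$ with $\mathcal{D}_v(\beta;\gamma'_v)\subseteq W(\CC_v)$; hence $\alpha_{i,v}(t)=O_v(1)$ on $\mathcal{D}_v(\beta;\gamma'_v)$. Taking the minimum of the $M_K$-constants produced and dividing by $m$ yields the stated identity. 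The only genuine obstacle is this final uniformity step — converting \emph{locally $M_K$-bounded} into \emph{$M_K$-bounded on an annulus of $M_K$-controlled radius} — together with the routine bookkeeping of the $\log^+$'s and the $\QQ$-coefficients; everything substantive is inherited from Lemma~\ref{lem:allthesame}.
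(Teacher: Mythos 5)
Your argument is essentially the paper's: pick a local defining equation $g$ for $D$ near $\beta$, apply Lemma~\ref{lem:allthesame} to $1/g$ to produce the main term, and absorb the N\'{e}ron correction $\alpha_{i,v}$ into $O_v(1)$ using local $M_K$-boundedness. The extra care you take with the $\log^+$ versus $\log$ bookkeeping, the $\QQ$-divisor normalization, and the passage from ``locally $M_K$-bounded'' to ``$M_K$-bounded on an annulus of $M_K$-controlled radius'' are all points the paper leaves implicit, but the substance and the key reduction to Lemma~\ref{lem:allthesame} are identical.
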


\begin{proof}
Choose an affine open $V$ containing $\beta$ on which $D$ is locally the divisor of $g_V\in K(X)$, and note that $-\log|g_V(t)|_v-\lambda_{X, D, v}(t)$ is locally bounded and continuous by definition. By Lemma~\ref{lem:allthesame}, there is a $\var_{V}$ such that
\[-\log|g_V(t)|_v=\frac{\log^+|g_V^{-1}|_\beta}{\deg(w_\beta)}\log|w_\beta(t)|_v+O_v(1)\]
on $\mathcal{D}_v(\beta; \var_{V, v})$. The estimate follows from this, since $\ord_\beta(D)=\log^+|g_V^{-1}|_\beta$.

We may now take $\gamma$ to be the minimum of the $\var_{V, v}$ as $V$ varies over the affine opens in some finite cover of $U$.
\end{proof}

\begin{lemma}\label{lem:nearpoles}
Suppose that $\beta\in X$, and that $\log\|P_\eta\|_\beta>\basin_\beta(f_\eta)$. Then there is an $M_K$-constant $\gamma$  such that for $t\in \mathcal{D}_v(\beta; \gamma_{v})$,
\[G_{f_t, v}(P_t)=\frac{G_{f_\eta, \beta}(P_\eta)}{\deg(w_\beta)}\log^+|w_\beta(t)|_v+O_v(1).\]
\end{lemma}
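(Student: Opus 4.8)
The plan is to reduce the statement to Lemma~\ref{lem:greensprops} applied fibrewise, using the hypothesis $\log\|P_\eta\|_\beta>\basin_\beta(f_\eta)$ together with Lemma~\ref{lem:allthesame} to control how $\log\|P_t\|_v$ and $\basin_v(f_t)$ behave as $t$ approaches $\beta$. First I would observe that by Lemma~\ref{lem:greensprops}, applied over the function field $F$ at the place $\beta$, the hypothesis gives $G_{f_\eta,\beta}(P_\eta)=\log\|P_\eta\|_\beta$; and more importantly the lemma furnishes an $M_K$-constant (in fact a single real constant here, since $F$ has only the archimedean-free places $\beta$) controlling the Green's function once we are above the basin threshold. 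The key point is that \emph{fibrality} makes $\basin_v(f_t)$ come from the $g_i$-part, hence from the restriction of $f$ to $H$, which is isotrivial; so $\basin_v(f_t)$ is $M_K$-bounded in a $v$-adic neighbourhood of $\beta$, i.e.\ $\basin_v(f_t)=O_v(1)$ for $t\in\mathcal{D}_v(\beta;\gamma_v)$ for a suitable $\gamma$.

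Next I would write $\|P_t\|_v=\max_i|x_i(t)|_v$ with $x_N=1$ (using the normalization $P_N=1$ allowed in the proof of Theorem~\ref{th:local}), where the $x_i\in K(X)$ are the coordinate functions of $P$. Applying Lemma~\ref{lem:allthesame} to each $x_i$ and taking the max, there is an $M_K$-constant such that on $\mathcal{D}_v(\beta;\gamma_v)$,
\[\log^+\|P_t\|_v=\frac{\log\|P_\eta\|_\beta}{\deg(w_\beta)}\log^+|w_\beta(t)|_v+O_v(1)=\frac{G_{f_\eta,\beta}(P_\eta)}{\deg(w_\beta)}\log^+|w_\beta(t)|_v+O_v(1),\]
where I have used $\log^+\|P_\eta\|_\beta=\log\|P_\eta\|_\beta=G_{f_\eta,\beta}(P_\eta)$, which holds because this quantity is positive (it exceeds $\basin_\beta(f_\eta)\geq 0$). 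Shrinking the annulus (increasing $\gamma$) if necessary, I can also force $\log\|P_t\|_v>\basin_v(f_t)+\gamma'_v$ for all $t\in\mathcal{D}_v(\beta;\gamma_v)$, where $\gamma'$ is the constant from Lemma~\ref{lem:greensprops}: indeed $\basin_v(f_t)+\gamma'_v=O_v(1)$ on the annulus by fibrality, while $\log\|P_t\|_v\to+\infty$ as $t\to\beta$ at the rate $\frac{\log\|P_\eta\|_\beta}{\deg(w_\beta)}\log^+|w_\beta(t)|_v$, which is strictly positive.

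Finally, with the hypothesis of Lemma~\ref{lem:greensprops} satisfied at $(f_t,P_t)$ for every $t\in\mathcal{D}_v(\beta;\gamma_v)$, that lemma gives $G_{f_t,v}(P_t)=\log\|P_t\|_v+O_v(1)$, where crucially the implied $M_K$-constant depends only on the restriction of $f$ to $H$ — which, by fibrality, is constant across the family, so this is a genuine $M_K$-constant independent of $t$. Combining with the displayed estimate for $\log^+\|P_t\|_v$ yields the claim. The main obstacle is the bookkeeping of the various $M_K$-constants: one must check that the threshold constant and implied constant from Lemma~\ref{lem:greensprops}, which a priori could vary fibre by fibre, are uniform in $t$ near $\beta$, and this is exactly where the fibrality hypothesis is essential — without it $\basin_v(f_t)$ and the Nullstellensatz constant $\var_2$ would depend on $t$ in an uncontrolled way as $t\to\beta$.
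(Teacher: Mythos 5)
The overall strategy — apply Lemma~\ref{lem:greensprops} fibrewise, verify its hypothesis near $\beta$ via Lemma~\ref{lem:allthesame}, and use fibrality to keep the implied $M_K$-constant of Lemma~\ref{lem:greensprops} uniform in $t$ — is the right one, and that last observation (that the threshold and implied constants in Lemma~\ref{lem:greensprops} depend only on the restriction of $f$ to $H$, hence are genuine $M_K$-constants by fibrality) is correct and essential. However, there is a genuine gap in the step where you claim that $\basin_v(f_t)=O_v(1)$ on the annulus by fibrality, because ``$\basin_v(f_t)$ comes from the $g_i$-part.'' It does not. Fibrality constrains the coefficients $a_{i,I}$ with $I_N=0$ — those are the ones defining $g_i$ and hence the restriction to $H$ — whereas $\basin_v(f_t)$ is built from the coefficients $a_{i,I}$ with $I_N\neq 0$, precisely the ones that are \emph{unconstrained} by fibrality and are free to vary (and to have poles) at $\beta$. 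In general $\basin_v(f_t)\to\infty$ as $t\to\beta$, so your claim that the hypothesis of Lemma~\ref{lem:greensprops} is satisfied because $\|P_t\|_v$ grows while $\basin_v(f_t)$ stays bounded is false.

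The hypothesis $\log\|P_\eta\|_\beta>\basin_\beta(f_\eta)$ is exactly what is needed to close this gap, but you have to use it as a comparison of pole orders, not as a boundedness statement. Since $\log\|P_\eta\|_\beta=\log|P_i|_\beta$ for some index $i$, the hypothesis says that $P_i^{I_N}/a_{j,I}$ has a pole of strictly positive order at $\beta$ for every $j$ and every $I$ with $I_N\neq 0$. Applying Lemma~\ref{lem:allthesame} to each such ratio shows that $\log|P_i(t)|_v-\frac{1}{I_N}\log|a_{j,I}(t)|_v$ grows like a strictly positive multiple of $\log^+|w_\beta(t)|_v$; shrinking the annulus then forces this quantity to exceed the threshold constant $\gamma'_v$ of Lemma~\ref{lem:greensprops} (and forces $\|P_t\|_v=|P_i(t)|_v$), at which point the rest of your argument goes through. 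This is the route the paper's proof takes; you should replace your bounded-$\basin_v$ step with this growth-rate comparison.
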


\begin{proof}
By hypothesis there is some $i$ such that for every coefficient $a_{j, I}$ of $f$, the function $P_i$ has a pole at $\beta$ of order greater than that of $a_{j, I}^{1/I_N}$, and that $G_{f_\eta, \beta}(P_\eta)=\log|P_i|_\beta$. In other words, the function $P_i^{I_N}/a_{j, I}\in F$ has a pole of positive order at $\beta\in X$. 
By Lemma~\ref{lem:allthesame}, we may choose an $M_K$-constant $\var_{1, j, I}$ such that for some $M_K$-constant $\var_{2, j, I}$ we have
\[\log\left|P_i(t)\right|_v\geq\frac{1}{I_N}\log\left|a_{j, I}(t)\right|_v+\frac{1}{I_N\deg(w_\beta)}\log\left|\frac{P_i^{I_N}}{a_{j, I}}\right|_\beta\log^+|w_\beta(t)|-\var_{2, j, I, v}\]
 for all $t\in \mathcal{D}_v(\beta; \var_{1, j, I, v})$. In particular, if $\var_3$ is the $M_K$-constant $\gamma$ associated to $f$ by Lemma~\ref{lem:greensprops}, then we can set 
 \[\var_{4, j, I}=\min\left\{\var_{1, j, I}, (\var_{2, j, I}+\var_{3})\frac{I_N}{\log|P_i/a_{j, I}|_\beta}\right\}\]
 to ensure that 
 \[\log\left|P_i(t)\right|_v\geq\frac{1}{I_N}\log\left|a_{j, I}(t)\right|_v+\var_{3, v}\]
 for all $t\in \mathcal{D}_v(\beta; \var_{4, j, I, v})$.
 Now if $\var_{5}=\max_{j, I}\{\var_{4, j, I}\}$ we have
\[\log\|P_t\|_v>\basin_v(f_t)+\var_{3, v}\]
for all $t\in \mathcal{D}_v(\beta; \var_{5, v})$. 
Also note that, taking $\gamma\geq \var_5$ large enough, we have $\|P_t\|_v=|P_i(t)|_v$ for all $t\in \mathcal{D}_v(\beta; \gamma_{v})$. 
By Lemma~\ref{lem:greensprops}, for these values of $t$ we see
\begin{eqnarray*}
G_{f_t, v}(P_t)&=&\log\|P_t\|_v+O_v(1)\\
&=&\log|P_i(t)|_v+O_v(1)\\
&=&\frac{\log|P_i|_\beta}{\deg(w_\beta)}\log^+|w_\beta(t)|_v+O_v(1)\\
&=&\frac{G_{f_\eta, \beta}(P_\eta)}{\deg(w_\beta)}\log^+|w_\beta(t)|_v+O_v(1).
\end{eqnarray*}
\end{proof}

The following lemma is a standard application of the triangle inequality.
\begin{lemma}\label{lem:elsewhere}
For each $v\in M_K$, let $Z_v\subseteq U(\CC_v)$, and suppose that the coefficients of $f$ and the coordinates of $P$ are all $M_K$-bounded on the $Z_v$. Then the functions $t\mapsto G_{f_t, v}(P_t)$ are $M_K$-bounded on the $Z_v$. 
\end{lemma}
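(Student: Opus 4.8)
The plan is to bound $G_{f_t,v}(P_t)$ on the sets $Z_v$ by iterating the estimate from Lemma~\ref{lem:greensprops}, treating separately the region where $\|P_t\|_v$ is large and the region where it is small. First I would fix the $M_K$-constant $\gamma$ produced by Lemma~\ref{lem:greensprops} and, by hypothesis, fix an $M_K$-constant $c$ such that all coordinates of $P$ and all coefficients of $f$ are bounded in absolute value by $e^{c_v}$ on $Z_v$; from the latter one obtains immediately an $M_K$-bound on $\basin_v(f_t)$ for $t\in Z_v$, say $\basin_v(f_t)\leq b_v$, and also an $M_K$-bound $\|P_t\|_v\leq e^{c_v}$ (after clearing denominators, using $P_N=1$ as in the proof of Theorem~\ref{th:local}, or else a uniform bound since the coordinates are bounded and $U$ is fixed).

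Next I would split $Z_v$ into $Z_v'=\{t\in Z_v:\log\|P_t\|_v>\basin_v(f_t)+\gamma_v\}$ and its complement $Z_v''$. On $Z_v'$ Lemma~\ref{lem:greensprops} applies directly and gives $G_{f_t,v}(P_t)=\log\|P_t\|_v+O_v(1)$; since $0\leq\log\|P_t\|_v\leq c_v$ there, and the implied constant in Lemma~\ref{lem:greensprops} is itself an $M_K$-constant, this yields $M_K$-boundedness of $G_{f_t,v}(P_t)$ on $Z_v'$. On $Z_v''$ we have $\log^+\|P_t\|_v\leq b_v+\gamma_v+\log^+\log(\cdots)$, i.e. a bound of the form $\log^+\|P_t\|_v\leq e_v$ for an $M_K$-constant $e$. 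Here I would observe that a single application of $f_t$ increases $\log^+\|\cdot\|_v$ by at most a controlled amount: using the triangle inequality on $f_{t,i}(P_t)=\sum_{|I|=d}a_{i,I}(t)P_t^I$ together with $f_{t,N}=x_N^d$, one gets $\log\|f_t(P_t)\|_v\leq d\log^+\|P_t\|_v+\basin_v(f_t)\cdot(\text{something})+O_v(1)$ — more precisely $\log^+\|f_t(P_t)\|_v \leq d\,e_v + d\,b_v + C_{1,v}$ for the combinatorial $M_K$-constant $C_1$ of Lemma~\ref{lem:greensprops}. Hence after finitely many (in fact, finitely many uniformly in $t$) iterations either $\log\|f_t^n(P_t)\|_v$ stays below $\basin_v(f_t)+\gamma_v$ forever — in which case $d^{-n}\log^+\|f_t^n(P_t)\|_v\to0$ and $G_{f_t,v}(P_t)$ is controlled by the finitely many intermediate bounded terms — or it eventually exceeds that threshold, at which point we are back in the situation of Lemma~\ref{lem:greensprops} and the telescoping argument there gives $G_{f_t,v}(P_t)=\log\|f_t^{n_0}(P_t)\|_v d^{-n_0}+O_v(d^{-n_0})$, again an $M_K$-bounded quantity since each $\log^+\|f_t^n(P_t)\|_v$ grew by only $M_K$-bounded increments.

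The main technical point to get right is that all of these constants — the threshold bound $b_v$ on $\basin_v(f_t)$, the one-step growth constant, and the number of iterations needed — are genuine $M_K$-constants, vanishing for all but finitely many $v$; this is where one uses that $f$ and $P$ are defined over the global field and have only finitely many places of bad reduction, so that for almost all $v$ the coefficients and coordinates are $v$-integral, $\basin_v(f_t)=0$, the combinatorial constants $C_{1,v}$ vanish, and the whole estimate collapses to the trivial non-archimedean inequality $\log\|f_t(P_t)\|_v\leq d\log^+\|P_t\|_v$. I expect the bookkeeping of these constants, rather than any genuinely new idea, to be the only obstacle; the proof really is, as the paper says, ``a standard application of the triangle inequality.''
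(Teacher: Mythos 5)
Your argument can be repaired, but as written it is considerably more involved than necessary and contains a claim that is actually false. The false claim is the parenthetical assertion that the orbit of $P_t$ either stays in the basin of infinity forever or escapes after ``finitely many (in fact, finitely many uniformly in $t$) iterations.'' There is no uniform bound on escape times: points $t$ for which $P_t$ lies close to the boundary of the filled Julia set of $f_t$ will take arbitrarily long to escape, and for $P_t$ inside it they never do. Fortunately this claim is not load-bearing --- the final inequality you extract does not actually depend on knowing $n_0(t)$ uniformly, because the iteration estimate $d^{-n}\log^+\|f_t^n(P_t)\|_v \leq \log^+\|P_t\|_v + O_v(1)$ already bounds $d^{-n_0}\log^+\|f_t^{n_0}(P_t)\|_v$ independently of $n_0$. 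But the phrasing ``grew by only $M_K$-bounded increments'' obscures this: the one-step growth is geometric (multiplication by $d$ in $\log^+\|\cdot\|_v$), not additive, and the cancellation that saves you comes from dividing by $d^{n_0}$, not from the increments being small.

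More fundamentally, the whole case split on whether $\log\|P_t\|_v$ exceeds $\basin_v(f_t)+\gamma_v$, and the invocation of Lemma~\ref{lem:greensprops}, are unnecessary. Since $G_{f_t,v}\geq 0$ by construction, one only needs an \emph{upper} bound, and for that a single one-sided triangle-inequality estimate does everything at once. The paper's proof is exactly this: with coefficient bounds $\log|a_{i,I}(t)|_v\leq\var_{1,v}$ on $Z_v$ and the combinatorial constant $\var_{2,v}$, one has
\[\log\|f_t(P_t)\|_v\leq d\log^+\|P_t\|_v+\var_{1,v}+\var_{2,v},\]
which upon iteration gives
\[d^{-n}\log^+\|f_t^n(P_t)\|_v\leq\log^+\|P_t\|_v+\frac{1}{d-1}(\var_{1,v}+\var_{2,v}),\]
and letting $n\to\infty$ bounds $G_{f_t,v}(P_t)$ directly, with $\log^+\|P_t\|_v$ $M_K$-bounded on $Z_v$ by hypothesis. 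No $\gamma$, no $\basin_v$, no Lemma~\ref{lem:greensprops}. You should streamline your argument along these lines: drop the decomposition $Z_v=Z_v'\cup Z_v''$, drop the appeal to Lemma~\ref{lem:greensprops}, and simply iterate the one-sided estimate.
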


\begin{proof}
Suppose that the coefficients $a_{i, I}$ of $f$ satisfy $\log|a_{i, I}(t)|_v\leq \var_{1, v}$ for  $t\in Z_v$, and let $\var_{2, v}=\log\binom{n+d-1}{d-1}$ if $v$ is archimedean, and $\var_{2, v}=0$ otherwise. Then for any $t\in Z_v$, the triangle inequality gives
\[\log\|f_t(P_t)\|_v\leq\log\max\{|a_{i, I}(t)P_t^I|_v\}+\var_{2, v}\leq d\log\|P_t\|_v+\var_{1, v}+\var_{2, v}.\]
By induction we have
\begin{equation}\label{eq:beevee}d^{-n}\log\|f_t^n(P_t)\|_v\leq \log\|P_t\|_v+\frac{1}{d-1}\left(\var_{1, v}+\var_{2, v}\right).\end{equation}
The lemma follows by taking $n\to\infty$, and noting that the coordinates of $P$ are (by hypothesis) $M_K$-bounded on the $Z_v$.
\end{proof}

\begin{lemma}\label{lem:badpoints}
Suppose that $\beta\in X(\overline{K})$ and that $G_{f, \beta}(P)=0$. Then there in an $M_K$-constant $\gamma$ such that we have $G_{f_t, v}(P_t)=O_v(1)$ for $t\in \mathcal{D}_v(\beta; \gamma_v)$ .
\end{lemma}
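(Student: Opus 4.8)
The plan is to use the hypothesis to bound $\|f_\eta^n(P_\eta)\|_\beta$ uniformly in $n$, and then to push that bound out to nearby $t$ by the maximum modulus principle on a small $v$-adic disk around $\beta$. Throughout I work under the reductions already in force in the proof of Theorem~\ref{th:local}, so in particular $P_N=1$.

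First I would observe that $G_{f_\eta,\beta}(f_\eta^n(P_\eta))=d^n G_{f_\eta,\beta}(P_\eta)=0$ for every $n\geq 0$. Hence no iterate can satisfy $\log\|f_\eta^n(P_\eta)\|_\beta>\basin_\beta(f_\eta)+\gamma'_\beta$, where $\gamma'$ is the $M_K$-constant from Lemma~\ref{lem:greensprops} (applied over $F$ at the place $\beta$): the proof of that lemma shows $G_{f_\eta,\beta}(Q)>0$ as soon as $\log\|Q\|_\beta>\basin_\beta(f_\eta)+\gamma'_\beta$, which would contradict the vanishing just noted. Therefore $\log\|f_\eta^n(P_\eta)\|_\beta\leq B:=\basin_\beta(f_\eta)+\gamma'_\beta$ for all $n$. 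Writing $f_\eta^n(P_\eta)=[Q_0^{(n)}:\cdots:Q_N^{(n)}]$ with $Q_N^{(n)}=1$ (possible since $f_N=x_N^d$ and $P_N=1$), this says exactly that every $Q_j^{(n)}\in F$ has a pole of order at most $B$ at $\beta$, \emph{uniformly in $n$}. Set $m=\lceil B/\deg(w_\beta)\rceil$, so that each $R_j^{(n)}:=Q_j^{(n)}w_\beta^{-m}$ is regular at $\beta$. Moreover, since $H$ is totally invariant and $\Supp((P_\eta)^*H)$ is finite, a straightforward induction shows that $f_\eta^n(P_\eta)$ can meet $H$ only along a fixed finite set $\mathcal Z\subseteq X$ --- the poles of the coordinates of $P$ together with the degeneracy locus of the family --- independently of $n$; hence all poles of $R_j^{(n)}$ lie in $(\mathcal Z\cup\{\text{zeros of }w_\beta\})\setminus\{\beta\}$, again independently of $n$.

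Next I would choose the $M_K$-constant $\gamma$ small enough that, for each $v$, the closed disk $\{t:\frac{1}{\deg(w_\beta)}\log|w_\beta(t)^{-1}|_v\leq\gamma_v\}$ contains none of the finitely many points of $\mathcal Z$ or zeros of $w_\beta$ other than $\beta$ itself; for all but finitely many $v$ this is just the residue disk at $\beta$, so $\gamma$ is indeed an $M_K$-constant. Each $R_j^{(n)}$ is then regular on this whole closed disk, so by the maximum modulus principle $|R_j^{(n)}(t)|_v$ is bounded there by its supremum over the bounding sphere $S_v$. On $S_v$ the coordinates of $P$ and coefficients of $f$ are $M_K$-bounded, so the telescoping estimate \eqref{eq:beevee} from the proof of Lemma~\ref{lem:elsewhere} furnishes an $M_K$-constant $C$ with $d^{-n}\log^+\|f_{t'}^n(P_{t'})\|_v\leq C_v$ for all $n$ and all $t'\in S_v$. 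Unwinding $Q_j^{(n)}=R_j^{(n)}w_\beta^{m}$ and using that $|w_\beta|_v$ is $M_K$-bounded on $S_v$, we obtain, for all $n$ and all $t\in\mathcal D_v(\beta;\gamma_v)$,
\[d^{-n}\log^+\|f_t^n(P_t)\|_v\leq d^{-n}\sup_{t'\in S_v}\log^+\|f_{t'}^n(P_{t'})\|_v+d^{-n}m\log^+|w_\beta(t)|_v+O_v(d^{-n}).\]
Letting $n\to\infty$, the first term on the right is at most $C_v$ and the others vanish, so $G_{f_t,v}(P_t)\leq C_v=O_v(1)$; since $G_{f_t,v}\geq 0$, this proves the lemma.

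The crux --- and the only place the hypothesis $G_{f_\eta,\beta}(P_\eta)=0$ genuinely enters --- is the uniformity in $n$ of the pole bound $B$, which makes the twisting exponent $m$ (hence the function $w_\beta^{-m}$ used in the maximum modulus step) independent of $n$, so that a single estimate on the fixed sphere $S_v$ controls every iterate; were $G_{f_\eta,\beta}(P_\eta)$ positive, the pole orders of the $Q_j^{(n)}$ at $\beta$ would grow like $d^nG_{f_\eta,\beta}(P_\eta)$ and the scheme would collapse, as it must. The remaining point, that the various constants really vanish for almost all $v$, is routine: spread $X$, $\beta$, $P$, and $f$ out over a suitable ring of $S$-integers and invoke good reduction.
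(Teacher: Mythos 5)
Your proof is essentially correct and reaches the same conclusion via the same broad strategy --- a maximum modulus argument on a small $v$-adic disk about $\beta$, with the hypothesis $G_{f_\eta,\beta}(P_\eta)=0$ used to control the growth of pole orders --- but the way you organize the estimate is genuinely different, and in a pleasant way. The paper twists the $n$th coordinate $P_{i,n}$ by $\pi^{\log^+|P_{i,n}|_\beta}$, a power of the uniformizer that grows with $n$; the bound it produces on the annulus $A_{\delta,\epsilon}$ therefore carries an $n$-dependent term $\log^+|P_{i,n}|_\beta\log(\epsilon/\delta)$, and it is only after dividing by $d^n$ and passing to the limit that the hypothesis $\lim_n d^{-n}\log^+|P_{i,n}|_\beta = G_{f_\eta,\beta}(P_\eta)=0$ kills this contribution, after which $\delta\to 0$ recovers the full punctured disk. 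You instead extract, before doing any local analysis, the uniform pole bound $\log\|f_\eta^n(P_\eta)\|_\beta\leq B$ for all $n$, by combining $G_{f_\eta,\beta}(f_\eta^n(P_\eta))=d^n\cdot 0=0$ with the contrapositive of Lemma~\ref{lem:greensprops} over the function field $F$ at $\beta$. That observation (which is not articulated in the paper's proof, though it is implicit in the dynamics) lets you use a single twisting function $w_\beta^{-m}$ independent of $n$, so your maximum modulus estimate is uniform in $n$ from the start and there is no need for a $\delta\to 0$ limit. The price you pay is that you must verify the disk avoids the fixed pole locus $\mathcal Z$; the paper handles the analogous issue by shrinking $\epsilon$ so that the relevant Laurent series converge, which is morally the same constraint. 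Where your write-up is thinner than the paper's is at the final step: the claim that the constants $\gamma_v$ and $C_v$ vanish for almost all $v$ is genuinely the content of about half of the paper's proof, which invokes Lemma~7 of \cite{pi:spec} and then runs a careful argument with $\epsilon'\to 1$ to push $G_{f_t,v}(P_t)$ down to $0$ on the full residue disk for all but finitely many $v$. Your one-sentence appeal to spreading out and good reduction is in the right spirit, and the claim is true, but if this were to be written out it would need roughly the same care the paper takes.
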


\begin{proof}
First we establish a bound for any given place $v\in M_K$, and later we will show that this bound is almost always zero (and hence the local bounds piece together to an $M_K$-constant bound).

We will choose a uniformizer $\pi\in F$ at $\beta$, and note that any function $\phi\in F$ admits a Laurent series expansion
\[\phi=a_m\pi^m+a_{m+1}\pi^{m+1}+\cdots,\]
where $a_i\in \overline{K}$, $m\in \ZZ$, and $a_m\neq 0$ (from which $m=-\log|\phi|_\beta$). Now, choose $0<\epsilon<1$ so that the series for the coordinates of $P_i$ and the coefficients of $f$ all converge on $0<|\pi|_v<\epsilon^{1/2}$. Since $\epsilon<\epsilon^{1/2}$, these functions all have a maximum $v$-adic absolute value on \[S_\epsilon=\{z\in X(\CC_v):|\pi(z)|_v=\epsilon\}.\]
Let $P_{i, n}$ denote the $i$th coordinate of $f^n(P)$. 
In particular, by \eqref{eq:beevee}  there is a constant $B_v\geq 0$ such that $\log|P_{i, n}(t)|_v\leq d^nB_v$ on $S_\epsilon$.
Also,
\[\pi^{\log^+|P_{i, n}|_\beta}P_{i, n}\in F\]
is regular on $|\pi|_v\leq \epsilon$, and so attains its maximum value on $S_\epsilon$. It follows, if
 \[A_{\delta, \epsilon}=\{z\in X(\CC_v):\delta\leq |\pi(z)|_v\leq\epsilon\},\]
for any given $0<\delta<\epsilon$, that
 \[\delta^{\log^+|P_{i, n}|_\beta}|P_{i, n}(t)|_v\leq \left|\pi(t)^{\log^+|P_{i, n}|_\beta}P_{i, n}(t)\right|_v\leq \epsilon^{\log^+|P_{i, n}|_\beta}d^nB_v,\]
for $t\in A_{\delta, \epsilon}$, whence
 \[\log|P_{i, n}(t)|_v\leq \log^+|P_{i, n}|_\beta\log(\epsilon/\delta)+d^nB_v.\]
 Since this is true for all $i$, we have
 \[G_{f_t, v}(P_t)=\lim_{n\to\infty}d^{-n}\log\|f^n_t(P_t)\|_v\leq  \lim_{n\to\infty}d^{-n}\log\|f_\eta^n(P_\eta)\|_\beta\log(\epsilon/\delta)+B_v=B_v\]
for $t\in A_{\delta, \epsilon}$, since $G_{f_\eta, \beta}(P_\eta)=0$. But this bound is independent of $\delta$, and hence holds on
\[\bigcup_{0<\delta<\epsilon}A_{\delta, \epsilon}=S_{\epsilon}\setminus\{\pi=0\}.\]
 By Lemma~\ref{lem:allthesame} there is a $\gamma_v\in\RR$ such that $\mathcal{D}_v(\beta; \gamma_v)\subseteq S_{\epsilon}\setminus\{\pi=0\}$. In other words, we have $G_{f_t, v}(P_t)$ bounded on some set of the form $\mathcal{D}_v(\beta; \gamma_v)$.

It remains to show that for all but finitely many $v\in M_K$, we may take $B_v=0$ in the above argument, and $\gamma_v=0$. 

Fix a function $\phi$ on $X$.
If $v$ is a non-archimedean place at which the coefficients of the series defining $\phi$ are integral, and the leading coefficient is a unit, then this series converges on $0<|\pi(t)|_v<1$. Note that this is a condition met for all but finitely many places, by Lemma~7 of \cite{pi:spec}. So, given that $v$ is a place of this description, we have \[|\phi(t)|_v=|\pi(t)|_v^{-\log|\phi|_\beta}\] for all $t$ with $0<|\pi(t)|_v<1$. So, applying this to each coefficient of $f$ and each coordinate of $P$, and assuming that the series for each meets the conditions of $v$-integrality, we see that every one of these functions $\phi$ satisfies
$|\phi(t)|_v=\epsilon^{-\log|\phi|_\beta}$ on $S_\epsilon$, for any $0<\epsilon<1$, and so in particular we may take $B_v=-\kappa\log\epsilon$ for some constant $\kappa$ which is independent of $\epsilon$. This now gives the bound \[G_{f_t, v}(P_t)\leq -\kappa\log\epsilon\] for $t\in A_{\delta, \epsilon}$, for any $0<\delta<\epsilon<1$. As above, since this bound is independent of $\delta$, it holds on $A_{0, \epsilon}$. But note that, for $\epsilon\leq \epsilon'<1$, we have $A_{0, \epsilon}\subseteq A_{0, \epsilon'}$, and so we have $G_{f_t, v}(P_t)\leq -\kappa\log\epsilon'$ on $A_{0, \epsilon}$. This holds for an arbitrary $\epsilon\leq\epsilon'<1$, and so taking $\epsilon'\to 1$, we have $G_{f_t, v}(P_t)=0$ on $A_{0, \epsilon}$. This is now independent of $\epsilon$, and so we have the same on $A_{0, 1}$.

So, for all but a finite set of places $v$, we have $G_{f_t, v}(P_t)=0$ on $0<|\pi(t)|_v<1$. But by Lemma~\ref{lem:allthesame}, for all but finitely many $v$ the latter set coincides with $\mathcal{D}_v(\beta; 0)$.
\end{proof}

We are now in a position to finish the proof of Theorem~\ref{th:local}.
Let $S\subseteq X$ be the set of points at which any coordinate of $P$ or any coefficient of $f$ has a pole, and let $U=X\setminus S$. Note that $\operatorname{Supp}(D)\subseteq S$, from Lemma~\ref{lem:greensprops} if nothing else.
For each $\beta\in \Supp(D)$, Lemmas~\ref{lem:localheight} and~\ref{lem:nearpoles} allow us to choose $M_K$-constants $\gamma_\beta$ and $\var_{1, \beta}$ such that
\begin{equation}\label{eq:boundzo}\left|G_{f_t, v}(P_t)-\lambda_{X, D, v}(t)\right|\leq \var_{1, \beta, v}\end{equation}
on $\mathcal{D}_v(\beta; \gamma_{\beta, v})$. We let $\var_1=\max_{\beta\in\operatorname{Supp}(D)}\var_\beta$.

Now, by standard arguments for local heights, we may choose an $M_K$-constant $\var_
2$ such that
\begin{equation}\label{eq:localheightsawayfromsupport}\lambda_{X, D, v}(t)\leq \var_{2, v}\end{equation}
whenever \[t\not\in Z:=\bigcup_{\beta\in\operatorname{Supp}(D)}\mathcal{D}_v(\beta; \gamma_{\beta, v}).\] 
Specifically if $D$ is given locally on the affine patch $V\subseteq U$ by $g_V$ (regular since $D$ is effective), then Lemma~8 of \cite{pi:spec} ensures the existence of an $M_K$-divisor $\var_V$ such that $|g_V(t)|_v<\var_{V, v}$ implies $|w_\beta(t)^{-1}|_v<\deg(w_\beta)\gamma_{\beta, v}$ for some $\beta\in\operatorname{Supp}(D)$. This bounds $\lambda_{X, D, v}(t)$ for $t\in V\setminus Z$, and combining the bounds for some finite affine cover of $X$ gives the claim.
 It suffices to similarly bound $G_{f_t, v}(P_t)$ on $U(\CC_v)\setminus Z$. 

Lemma~\ref{lem:badpoints} provides, for each $\beta\in S\setminus\Supp(D)$,  constants $\gamma_\beta$ and $\var_{3, \beta}$ such that
\begin{equation}\label{eq:greensatbad}G_{f_t, v}(P_t)\leq \var_{3, \beta, v}\end{equation}
for $t\in \mathcal{D}_v(\beta; \gamma_{\beta, v})$. But again by Lemma~8 of \cite{pi:spec}, the coordinates of $P$ and the coefficients of $f$ are $M_K$-bounded on the complement of $\bigcup_{\beta\in S}\mathcal{D}_v(\beta; \gamma_{\beta, v})$, and so by Lemma~\ref{lem:elsewhere} we have an $M_K$-constant $\var_4$ such that
\begin{equation}\label{eq:greenselsewhere}G_{f_t, v}(P_t)\leq \var_4\end{equation}
on this set.

Now, take
\[C=\max\left\{\max_{\beta\in\operatorname{Supp}(D)}\{\var_{1, \beta}\}, \max_{\beta\in S\setminus\Supp(D)}\{\var_2+\var_{3, \beta}\}, \var_2+\var_4\right\},\]
an $M_K$-constant. By \eqref{eq:boundzo}, \eqref{eq:localheightsawayfromsupport}, \eqref{eq:greensatbad}, and \eqref{eq:greenselsewhere} we have
\[\left|G_{f_t, v}(P_t)-\lambda_{X, D, v}(t)\right|\leq \var_{v}\]
for all $t\in U$.
\end{proof}

%\begin{remark}
%The function $t\mapsto G_{f_t, v}(P_t)$ is continuous, in the $v$-adic topology. In particular, if we return to our hypothesis that $\log\|P\|_\beta>\basin_\beta(f)$ for all $\beta$ with $G_{f, \beta}(P)>0$, then there is another way of stating Theorem~\ref{th:local}. Specifically, in that case the data
%\[g_U=P_n\]
%and
%\[\alpha_{U, v}(t)=G_{f_t, v}(P_t)-\log^+|P_n(t)|_v\]
%defines a N\'{e}ron divisor corresponding to the Cartier divisor $\sum G_{f, \beta}(P)[\beta]$.
%\end{remark}

\section{Proof of the main result}\label{sec:proof}

Before proceeding with the bulk of the proof of Theorem~\ref{th:main}, we note that there is no injunction against the image of $P:X\to\PP^N$ landing entirely in the invariant hyperplane, in which case the Green's functions developed in Section~\ref{sec:greens} will be entirely useless. Fortunately, in this special case Theorem~\ref{th:main} reduces to well-known facts about heights.
\begin{lemma}\label{lem:isotrivcase}
Let $f:\PP^N\to\PP^N$ be a morphism defined over $K$, and let $P:X\to\PP^N$ be a morphism. Then
\[\hat{h}_f(P_t)=\hat{h}_{f}(P_\eta)h_{X, D}(t)+O(1),\]
for some divisor $D\in\Div(X)\otimes\QQ$ of degree 1.
\end{lemma}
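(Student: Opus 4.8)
The plan is to exploit the fact that $f$ does not vary, so that $t\mapsto\hat{h}_f(P_t)$ is simply a fixed Weil height on $\PP^N$ precomposed with the fixed morphism $P$; the statement then falls out of the functoriality and linearity of the Weil height machine, with no reference to the dynamical Green's functions of Section~\ref{sec:greens}.

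First I would recall the standard fact that $\hat{h}_f$ differs from the naive height $h_{\PP^N,H}$ by a bounded amount. Since $\operatorname{Pic}(\PP^N)=\ZZ[H]$ and $f^*$ acts on it by multiplication by $d=\deg f$, we have $f^*H\sim dH$, so functoriality gives $h_{\PP^N,H}(f(Q))=dh_{\PP^N,H}(Q)+O(1)$, and the usual telescoping argument (as in the proof of Lemma~\ref{lem:greensprops}) then shows $\hat{h}_f=h_{\PP^N,H}+O(1)$ on $\PP^N(\overline{K})$. In other words, $\hat{h}_f$ is a Weil height for the class of $H$. Composing with $P\colon X\to\PP^N$ and using functoriality once more, $\hat{h}_f(P_t)=h_{X,P^*H}(t)+O(1)$.

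Next I would set $e=\deg(P^*H)\geq 0$. If $e>0$, take $D=\frac{1}{e}P^*H\in\Div(X)\otimes\QQ$, a divisor of degree $1$; linearity of the height machine gives $h_{X,P^*H}(t)=e\,h_{X,D}(t)+O(1)$, hence $\hat{h}_f(P_t)=e\,h_{X,D}(t)+O(1)$. If instead $e=0$, then $P$ is constant, $P_t$ is a fixed point of $\PP^N(\overline{K})$, so $\hat{h}_f(P_t)=O(1)$, and we let $D$ be an arbitrary divisor of degree $1$; in this case too $\hat{h}_f(P_t)=e\,h_{X,D}(t)+O(1)$.

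It remains to identify the leading coefficient $e$ with $\hat{h}_f(P_\eta)$. Over the function field $F=\overline{K}(X)$, the height with respect to $H$ of a point $Q\in\PP^N(F)$, i.e.\ the function-field Weil height of the associated morphism $X\to\PP^N$, is exactly $\deg Q^*H$, with no error term. Applying this to $f_\eta^n(P_\eta)=f^n\circ P$ and using $(f^n)^*H\sim d^nH$, we get $d^n e$ for every $n$, so that $\hat{h}_f(P_\eta)=\hat{h}_{f_\eta}(P_\eta)=\lim_{n\to\infty}d^{-n}(d^n e)=e$ (and $\hat{h}_f(P_\eta)=0$ when $e=0$). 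Substituting into the previous paragraph yields $\hat{h}_f(P_t)=\hat{h}_f(P_\eta)h_{X,D}(t)+O(1)$, as required. I do not expect any genuine obstacle here; the only points needing care are the degenerate case of a constant $P$ and the verification that the canonical height of the generic point over the function field is precisely $\deg P^*H$, which is what pins down the correct leading coefficient.
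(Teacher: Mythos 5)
Your proposal is correct, and the first half coincides with the paper's: use $\hat{h}_f = h_{\PP^N,H} + O(1)$ and functoriality of Weil heights to reduce to $\hat{h}_f(P_t) = h_{X,P^*H}(t) + O(1)$, then normalize $P^*H$ to a degree-one $\QQ$-divisor. The divergence is in how you identify $\deg(P^*H)$ with $\hat{h}_f(P_\eta)$. The paper reuses the Green's-function machinery of Section~\ref{sec:greens}: since $f$ has constant coefficients, $\basin_\beta(f)=0$ for every place $\beta$ of the function field $\overline{K}(X)$, and (as all such places are non-archimedean with trivial reduction) $G_{f,\beta}(P_\eta)=\log^+\|P_\eta\|_\beta$ exactly; summing over $\beta$ gives $\hat{h}_f(P_\eta)=h(P_\eta)=\deg(P^*H)$. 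You instead argue directly from the definition of the canonical height over the function field: the naive $H$-height of a morphism $Q\colon X\to\PP^N$ is exactly $\deg Q^*H$ with no error term, and $(f^n)^*H\sim d^nH$ gives $h(f^n(P_\eta))=d^n\deg(P^*H)$, so the limit is $\deg(P^*H)$. Both arguments are correct; yours is slightly more elementary in that it bypasses Lemma~\ref{lem:greensprops} entirely, relying only on linear equivalence and the pull-back formula for degrees, and it also makes the degenerate case $\deg(P^*H)=0$ (i.e.\ $P$ constant) explicit, which the paper leaves implicit. The paper's route is shorter given that the Green's-function lemmas were already proved, and it has the side benefit of illustrating that the machinery specializes nicely when $f$ is isotrivial.
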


\begin{proof}
Using the basic properties of the canonical height~\cite{call-silv}, and the functoriality of heights with respect to morphisms~\cite[p.~184]{hind-silv}, we have
\begin{eqnarray*}
\hat{h}_f(P_t)&=&h_{\PP^N, H}(P_t)+O_f(1)\\
&=&h_{X, P^*H}(t)+O_{f, P}(1).
\end{eqnarray*}
It remains to show that $\deg(P^*H)=\hat{h}_{f}(P_\eta)$ in this special case. But this is straightforward in light of the results above. In particular, $\basin_v(f)=0$ for any place $v$ of the function field $K(X)$, and hence $G_{f, v}(P)=\log^+|P_\eta|_v$ for any place $v$. Summing over all places, we see that $\hat{h}_f(P_\eta)=h(P_\eta)=\deg(P)$.
\end{proof}

Now, if the map $P:X\to\PP^N$ lands entirely in the hypersurface $H\subseteq\PP^N$, we may compute the canonical height by restricting the problem to $H$. Our hypothesis that the family is fibral ensures, after a change of variables, that it retricts to a constant family of maps on $H\cong \PP^{N-1}$. Since the canonical height is invariant under change of coordinates, and since composing with this change of coordinates preserves the condition that the image of $P$ lies in $H$, Lemma~\ref{lem:isotrivcase} now gives us Theorem~\ref{th:main}. So we may assume from here on out that the image of $P$ is not contained in $H$, and so there are in fact only finitely many points $t\in X$ such that $P_t\in H$.

Given this restriction, we now define
\[D(f, P)=\sum_{\beta\in X} G_{f_\eta, \beta}(P_\eta)[\beta]\in\Div(X)\otimes \QQ\]
as above.
We note that, by the basic properties of the Green's functions established in Section~\ref{sec:greens}, $D(f, P)$ is an effective divisor of degree $\hat{h}_{f_\eta}(P_\eta)$.  Theorem~\ref{th:main} follows from summing the estimate in Theorem~\ref{th:local} over all places of $K$, or any finite extension thereof, and by taking the divisor in the statement to be $D(f, P)/\deg(D(f, P))$.

Corollary~\ref{cor:lang} follows from the fact \cite[Proposition 5.4, p.~115]{lang} that, for any two divisors $D, E$ of the same degree on $X$, we have
\[h_{X, D}(t)=h_{X, E}(t)+O\left(h_{E, X}(t)^{1/2}\right),\]
with an improvement in the error to $O(1)$ when $X$ is rational.

\end{document}